\newtheorem{theorem}{Theorem}[section]
\newtheorem{lemma}[theorem]{Lemma}
\newtheorem{corollary}[theorem]{Corollary}
\newtheorem{proposition}[theorem]{Proposition}
\newtheorem{conjecture}{Conjecture}
\theoremstyle{definition}
\theoremstyle{remark}
\newtheorem{remark}[theorem]{Remark}
\numberwithin{equation}{section}
\newcommand{\R}{\mathbb{R}}
\newcommand{\N}{\mathbb{N}}
\newcommand{\Z}{\mathbb{Z}}
\newcommand{\centroid}{\mathfrak{c}}
\providecommand{\vol}{\mathop{\mathcal{H}}\nolimits}
\providecommand{\cone}{\mathop{\rm cone}\nolimits}
\providecommand{\conv}{\mathop{\rm conv}\nolimits}
\providecommand{\proj}{\mathop{\rm proj}\nolimits}
\pgfplotsset{compat=1.18}
\begin{document}

\title{Reducing the Large Set Threshold for Oertel's Conjecture on the Mixed-Integer Volume\thanks{A short version of this article appeared in the proceedings of IPCO 2025 \cite{CirstiSalas-IPCO2025}. This extended version
contains more detailed discussions, examples, results, and proofs.}}

\author{Andrés Cristi
\thanks{EPFL, Lausanne, Switzerland. {\tt andres.cristi@epfl.ch}}
	\and 			
	David Salas \thanks{Instituto de ciencias de la Ingenier\'ia, Universidad de O'Higgins, Rancagua, Chile. {\tt david.salas@uoh.cl}}
}
\date{\vspace{-1em}}

\maketitle
\thispagestyle{empty}
\begin{abstract}
    In 1960,  B. Gr\"{u}nbaum proved that, for any convex body $C\subset\R^d$ and every halfspace $H$ containing the centroid of $C$, the volume of $H\cap C$ is at least a $\frac{1}{e}$-fraction of the volume of $C$. 
    In  
    2014,
    Oertel conjectured
    that a similar result holds for mixed-integer convex sets. Concretely, he proposed that for any convex body $C\subset \R^{n+d}$, there should exist a point $\mathbf{x} \in S=C\cap(\Z^{n}\times\R^d)$ such that every halfspace $H$ containing $\mathbf{x}$ satisfies
    \[
    \vol_d(H\cap S) \geq \frac{1}{2^n}\frac{1}{e}\vol_d(S),
    \]
    where $\mathcal{H}_d$ denotes the $d$-dimensional Hausdorff measure. While the conjecture remains open, Basu and Oertel proved in 2017 that the above inequality holds for sets that are sufficiently large in terms of a measure known as the \emph{lattice width}. In this work, we improve upon this result, substantially reducing the threshold at which a set can be considered large. We reduce this threshold from an exponential to a polynomial dependency on the dimension, thereby significantly enlarging the family of mixed-integer convex sets for which Oertel's conjecture holds.
\end{abstract}
\thispagestyle{empty}


\section{Introduction}\label{sec:Intro}

A classic result by Gr\"{u}nbaum~\cite{Grunbaum1960Partitions} states that if $C\subset \R^d$ is a convex body, then every halfspace $H$ of $\R^d$ that contains the centroid of $C$ satisfies
\begin{equation}\label{eq:GrunbaumInequality}
\frac{\mathrm{vol}_d(H\cap C)}{\mathrm{vol}_d(C)} \geq \left( \frac{d}{d+1}\right)^d\geq \frac{1}{e},
\end{equation}
where $\mathrm{vol}_d(\cdot)$ denotes for the usual $d$-dimensional volume, i.e., the Lebesgue measure over $\R^d.$ 
This result, known as Gr\"{u}nbaum's inequality, together with the study of general measures over convex sets \cite{Grunbaum1960Partitions,Grunbaum1963Measures}, is at the core of many advances in convex geometry (see, e.g., \cite{Klartag2007ACentral,Stephen2017Grunbaum,Bobkov2010Probability,Boroczky2017Cone-volume,Brzezinski2013Volume,Belloni2008OnTheSymmetry,Lovasz2007TheGeometry,Stephen2019Applications}), and has proven to be useful in convex optimization and in the study of cutting plane algorithms \cite{Nemirovsky1983Complexity} and randomized algorithms \cite{Bertsimas2004Solving,He2008Semidefinite,Dabbene2010ARandomized}. A particularly interesting application is the study of the information complexity of convex optimization \cite{BasuOertel2017Centerpoints,Basu2023Complexity,Basu2024Information}: when optimizing a linear objective function over a convex body accessible only via a separation oracle, Gr\"unbaum's inequality gives a bound on the minimum number of oracle queries required to find a solution.

Motivated by this application to convex optimization, 
Oertel~\cite{oertel2014integer}
studied how to generalize Gr\"{u}nbaum's inequality to \textit{mixed-integer convex bodies}, that is, sets of the form $S:=C\cap(\Z^n\times\R^d)$, where $C$ is a convex body. His analysis followed three steps. First, the relative volume of a set $A\subset S$ should be measured by\footnote{This idea of volume for mixed-integer convex sets can be formalized using Hausdorff measures (see Section \ref{sec:Preliminaries}).}
    \begin{equation}\label{eq:MixedInteger-Volume}
    \mu(A) = \frac{\sum_{z\in \Z^n} \mathrm{vol}_d\left( A\cap(\{z\}\times\R^d) \right)}{\sum_{z\in \Z^n} \mathrm{vol}_d\left( S\cap(\{z\}\times\R^d) \right) }.
    \end{equation}
    Second, since the centroid of $S$ can be outside $S$, he proposed to replace it by a point in $S$ that maximizes the halfspace depth with respect to $\mu$, i.e., a point $\mathbf{x}$ that maximizes
    \[
    \inf\{ \mu(H\cap S)\, :\, H\text{ is a halfspace with }\mathbf{x}\in H  \},
    \]
    which he called a \textit{centerpoint}. We denote by $\mathcal{F}(S)$ the maximum halfspace depth normalized by $\mu(S)$, and call it the Oertel radius of $S$ (see Section~\ref{subsec:OertelRadius} below). In other words, a halfspace containing a centerpoint of $S$ is guaranteed to contain a fraction $\mathcal{F}(S)$ of the mixed-integer volume of $S$, and by definition, no better guarantee is possible. 
    Finally, he studied lower bounds for $\mathcal{F}(S)$, in the spirit of Gr\"{u}nbaum's inequality \eqref{eq:GrunbaumInequality}.
    The centerpoint approach was then extended by Basu and Oertel in \cite{BasuOertel2017Centerpoints}, and used in a series of papers \cite{BasuOertel2017Centerpoints,Basu2023Complexity,Basu2024Information} to study information complexity in convex mixed-integer optimization.
    
\paragraph{Oertel's conjecture on the mixed-integer volume and a matching upper bound:}
Oertel conjectured that the following type of instance should be the worst case for $\mathcal{F}(S)$.  Let $K\subset\R^d$ be a $d$-dimensional simplex.
Take the mixed-integer convex body $S$ defined by  $C = [0,1]^n\times K$,  i.e., $S = C\cap(\Z^n\times\R^d) = \{0,1\}^n\times K$. In this example, one can easily prove the following facts:
\begin{enumerate}[(i)]
    \item All centerpoints are given by $(z,c)$ with $z\in \{0,1\}^{n}$ and $c$ the centroid of $K$.
    \item Every halfspace $H$ containing a centerpoint $(z,c)$ satisfies
    \[
    \mu(H\cap S) \geq \frac{1}{2^n}\left(\frac{d}{d+1}\right)^d
    \]
    \item For each centerpoint $(z,c)$, there is a vector $u_n\in\R^n$  such that the halfspace in $\R^n$ given by  $H_1 = [u_n^{\top}(w-z) \geq 0]$ strictly separates $z$ from the other points in $\{0,1\}^n$, and there is a vector $u_d\in\R^d$ such that $H_2 = [u_d^{\top}(x-c) \geq 0]$ is the halfspace of $\R^d$ that passes through the centroid of $K$ parallel to one of its bases, and contains exactly a fraction $(d/(d+1))^d$ of the volume of $K$ (tight for Gr\"unbaum's inequality). Then, for a sufficiently large $R>0$, the halfspace $H = [ R u_n^{\top}(w-z) + u_d^{\top}(x-c) \geq 0 ] \subset\R^{n+d}$ verifies
\[
\mu(H\cap S) = \frac{\mathrm{vol}_d(H_2\cap K)}{2^n\mathrm{vol}_d(K)} = \frac{1}{2^{n}}\left(\frac{d}{d+1}\right)^d.
\]
\end{enumerate}
 
Thus, in this example, $\mathcal{F}(S) = \frac{1}{2^n}\left(\frac{d}{d+1}\right)^d$. This combines in one instance the worst cases for both the discrete ($d=0$) and the continuous ($n=0$) setting: since one can isolate the integer variables of $\{0,1\}^n$ and the volume of all the fibers is the same, one is essentially forced to apply Gr\"{u}nbaum's inequality to a single fiber that contains a fraction $\frac{1}{2^n}$ of the total volume. \emph{Oertel's conjecture} is that this is, in fact, the worst scenario.

\begin{conjecture}[{Oertel's conjecture \cite[Conjecture 4.1.20]{oertel2014integer}}]\label{conj:Oertel} Let $C\subset\R^{n+d}$ be a convex body and $S =C\cap (\Z^n\times\R^d)$. Then, 
\[
\mathcal{F}(S) \geq \frac{1}{2^n}\left(\frac{d}{d+1}\right)^d \geq \frac{1}{2^n}\frac{1}{e}.
\]
\end{conjecture}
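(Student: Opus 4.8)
:}
The plan is to prove the bound for an \emph{arbitrary} convex body $C\subset\R^{n+d}$ by induction on the number of integer variables $n$, keeping the continuous dimension $d$ free. The base case $n=0$ is Gr\"unbaum's inequality itself: for $S=C$ the centroid $\centroid(C)$ lies in $S$, and \eqref{eq:GrunbaumInequality} gives $\mu(H\cap S)=\vol_d(H\cap C)/\vol_d(C)\ge(\tfrac{d}{d+1})^d\ge\tfrac1e$ for every halfspace $H\ni\centroid(C)$, so $\mathcal F(S)\ge\tfrac1e$. For the inductive step, assume $\mathcal F(\,\cdot\,)\ge\tfrac1{2^{n-1}}\cdot\tfrac1e$ for every mixed-integer convex body with $n-1$ integer variables and arbitrary continuous dimension. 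Given $S=C\cap(\Z^n\times\R^d)$, single out the first integer coordinate $x_1$ and, for each $k\in\Z$, set $S_k:=\{(x',y)\in\Z^{n-1}\times\R^d:(k,x',y)\in S\}$; this is again a mixed-integer convex body, the intersection with $\Z^{n-1}\times\R^d$ of the convex slice $\{(x',y):(k,x',y)\in C\}$. Write $\mu_k$ for its normalized mixed-integer volume (so $\mu_k(S_k)=1$), $w_k$ for its unnormalized total $d$-volume, $W:=\sum_k w_k$, and let $\mathbf x_k\in S_k$ be a centerpoint furnished by the inductive hypothesis, so $\mu_k(H'\cap S_k)\ge\tfrac1{2^{n-1}e}$ for every halfspace $H'$ of $\R^{n-1+d}$ through $\mathbf x_k$. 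Everything reduces to a one-integer-variable core: choose an anchor index $k^\star\in\Z$ and a point $\mathbf y^\star\in S_{k^\star}$ such that $(k^\star,\mathbf y^\star)$ is a centerpoint of $S$ with Oertel radius at least $\tfrac12\cdot\tfrac1{2^{n-1}e}=\tfrac1{2^ne}$.

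To attack the core, observe that a halfspace $H=\{a\,x_1+\langle a',x'\rangle+\langle b,y\rangle\ge\gamma\}$ through $(k^\star,\mathbf y^\star)$ restricts on slice $k$ to the halfspace $H_k:=\{\langle a',x'\rangle+\langle b,y\rangle\ge\gamma-ak\}$ of $\R^{n-1+d}$, so that $\mu(H\cap S)=\tfrac1W\sum_k w_k\,\mu_k(H_k)$ and $(H_k)_k$ is a monotone ``sweep'' of parallel halfspaces (growing as $ak$ decreases, shrinking as it increases). I would take $k^\star$ to be a weighted median of $(w_k)_k$, i.e.\ $\sum_{k\le k^\star}w_k\ge W/2$ and $\sum_{k\ge k^\star}w_k\ge W/2$: this already settles every cut whose normal is supported on $x_1$, since then $H$ equals $\{x_1\ge k^\star\}$ or $\{x_1\le k^\star\}$ and captures at least $W/2\ge\tfrac1{2^ne}W$. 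For $\mathbf y^\star$ I would demand that it be central, at level $\tfrac1{2^ne}$, for the superposition measure $\rho:=\sum_k w_k\,\mu_k$ on $\Z^{n-1}\times\R^d$ (of total mass $W$) — i.e.\ $\rho(H_0)\ge\tfrac1{2^ne}W$ whenever $H_0\ni\mathbf y^\star$ — so that the ``untilted'' cuts $a=0$, which act by one and the same halfspace $H_0$ on every slice and thus capture exactly $\rho(H_0)$, are handled; such a $\mathbf y^\star$ should exist by the inductive hypothesis applied slice by slice together with a superposition (averaging) lemma for $\rho$, and one must additionally arrange that it lies in $S_{k^\star}$, which couples the two choices.

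The genuinely hard step — and, I believe, the reason the conjecture is still open — is the \emph{tilted} case $a\ne0$. Now the sweep $(H_k)_k$ cuts slices of \emph{different} volumes in a correlated fashion, and one must show that replacing each $H_k$ by the single halfspace against which a bound is available — either $H_{k^\star}$, controlled in slice $k^\star$ by the inductive hypothesis, or the limiting untilted cut, controlled against $\rho$ — loses at most the factor $\tfrac12$. Morally this is a one-dimensional rearrangement/transport estimate, comparing the diagonal sweep across the volume profile $(w_k)_k$ with the worst fiberwise cut; it cannot hold in full generality by elementary means, since in the worst-case example of the introduction all $w_k$ coincide and there is no tension, whereas in general the discrete ``at which fiber to anchor'' decision and the continuous Gr\"unbaum geometry pull against each other. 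The known partial results — Basu--Oertel's, and those of the present paper — amount precisely to the statement that under a largeness hypothesis (a lower bound on the lattice width of $S$, here only polynomial in $n$ and $d$) the sweep varies slowly enough across enough fibers that the tilted cut can be compared fiberwise and the factor $\tfrac12$ recovered; removing this hypothesis altogether is the open part.

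As a fallback I would keep in reserve a non-inductive route: a direct transference from Gr\"unbaum's inequality, aiming to show that a centerpoint of $S$ can be placed inside the ``central region'' of $C$ at the cost of a single factor $2^{-n}$ — for instance by covering a neighborhood of $\centroid(C)$ with $2^n$ lattice translates of a small convex set and tracking the most voluminous translate. I expect this to run into the same essential obstruction, namely making the $2^{-n}$ loss tight and uniform over all halfspaces, but it may be more tractable in special cases such as $d=0$ or bodies $C$ already containing a lattice point in their central region.
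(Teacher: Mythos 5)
Your proposal is not a proof, and it is important to be clear that the statement you are addressing is precisely the open conjecture: the paper itself does not prove \Cref{conj:Oertel} in general, it only establishes it under a largeness hypothesis (\Cref{thm:general_n}: the projection $\proj_{\R^n}(C)$ contains a ball of radius $\Omega(d^2 n^{3/2})$, and \Cref{cor:Main-LatticeWidth} in terms of lattice width), via slice/box volume approximations and a centroid shift rather than via induction on $n$. Your text is an honest strategy sketch, and you yourself identify where it breaks: the tilted case $a\neq 0$ of the one-integer-variable core is exactly the unresolved heart of the conjecture, and no argument is offered for it. So the gap is not incidental — the entire inductive step is missing.

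Beyond that admitted gap, two intermediate claims you lean on are themselves unproved and nontrivial. First, the existence of $\mathbf y^\star$ that is simultaneously (i) a $\tfrac{1}{2^n e}$-centerpoint for the superposition measure $\rho=\sum_k w_k\mu_k$ and (ii) an element of the specific fiber $S_{k^\star}$ (the weighted median fiber) does not follow from applying the inductive hypothesis ``slice by slice'': a centerpoint of $\rho$ is a statement about an aggregate of measures supported on different lattice fibers of $\Z^{n-1}\times\R^d$, and proving a $\tfrac{1}{2^{n-1}e}$-type bound for such an aggregate is essentially a reformulation of the conjecture itself, not a consequence of it for the individual slices; the coupling with $k^\star$ makes this worse, as the worst-case example in the introduction shows centerpoints are fiber-specific. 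Second, the implicit claim that handling the two extreme cuts (pure $x_1$-cuts and untilted cuts) plus a factor $\tfrac12$ suffices for all tilted cuts fails as stated: in the cube-times-cone example, a halfspace $H=[Ru_n^\top(w-z)+u_d^\top(x-c)\ge 0]$ with $R$ large simultaneously isolates one fiber and performs the tight Gr\"unbaum cut inside it, so the loss against the median-fiber bound and the loss against the $\rho$-centerpoint bound compound multiplicatively rather than being bounded by a single factor $\tfrac12$; any correct argument must treat the discrete isolation and the continuous cut jointly. In short, the base case and the bookkeeping identity $\mu(H\cap S)=\tfrac1W\sum_k w_k\mu_k(H_k)$ are fine, but nothing in the proposal closes, or even reduces, the open problem; the paper's actual contribution is the quantitatively weaker but complete statement for large sets, proved by approximating $\vol_d(S)$ by $\vol_{n+d}(C)$ (\Cref{lem:slice_with_ball_around}, \Cref{lem:approx_mu_with_nu}), shifting the centroid into $\Z^n\times\R^d$ (\Cref{lem:shift_centroid}), and invoking Gr\"unbaum's inequality on both sides of the cut.
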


\paragraph{Partial results on Oertel's conjecture:} Two results stand out in prior work towards settling \Cref{conj:Oertel}. The first, by Oertel himself (see \cite[Theorem 4.1.19]{oertel2014integer} or \cite[Corollary 3.4]{BasuOertel2017Centerpoints}), states that 
for every mixed-integer convex body $S\subseteq \Z^n\times \R^d$,
\begin{equation}\label{eq:BasuOertel-bound}
    \mathcal{F}(S)\geq \frac{1}{2^{n}(d+1)}.
\end{equation}
This was proved with a technique based on Helly numbers (see, e.g., \cite{Basu2023Complexity} and the references therein). While the factor $1/2^n$ is expected after the previous discussion, the factor $1/(d+1)$ matches the conjecture only for the case $d=1$.

The second result, by Basu and Oertel~\cite{BasuOertel2017Centerpoints},\footnote{The works \cite{oertel2014integer,BasuOertel2017Centerpoints} and the subsequent works \cite{Basu2023Complexity,Basu2024Information} are much richer than what we describe here. To streamline the exposition, we focus only on the elements relevant to \Cref{conj:Oertel}.} states that if the set of integer points belonging to the projection of $S$ onto $\R^n$ is sufficiently large, then \Cref{conj:Oertel} holds true. The concept of ``large set'' there is measured in terms of the \textit{lattice width}, which for a set $D\subset\R^n$ is defined as
\begin{equation}
  \omega(D) = \min_{u\in\Z^{n}\setminus\{0\}}\left( \max_{z\in D}u^{\top}z - \min_{z\in D}u^{\top}z \right).  
\end{equation}
At a high level, $\omega(D)$ is the minimum width of the set along integral directions. 
For a set $C$ in $\R^{n+d}$, we denote by $\proj_{\R^n}(C)$ the (orthogonal) projection of $C$ onto $\R^n$, under the natural identification $\R^{n+d} =\R^n\times\R^d$. With this notation, the positive result of Basu and Oertel can be summarized as follows.
\begin{theorem}[{\cite[Theorem 3.6]{BasuOertel2017Centerpoints}}]\label{thm:BasuOertel-LargeSets} There exists a universal constant $\alpha>0$ such that for every $n,d\in\N$ and every convex body $C\subset\R^{n+d}$ with $\omega(\proj_{\R^n}(C)) > 2cn(n+d)^{5/2}\alpha n^{n+1}$ for some $c\in\R_+$, it holds
\[
\mathcal{F}(C\cap (\Z^n\times \R^d)) \geq e^{-\frac{1}{c}-1} +e^{-\frac{2}{c}} - 1.
\]
In particular, if $c\in\R_+$ is such that  $e^{-\frac{1}{c}-1} +e^{-\frac{2}{c}} - 1\geq 2^{-(n+1)}$, then Conjecture \ref{conj:Oertel} holds for $C$. 
\end{theorem}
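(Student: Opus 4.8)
The plan is to reduce the mixed-integer statement to the classical Gr\"unbaum inequality, used once inside the real fibers of $C$ and once (in a discrete form) in the integer coordinates, and to spend the largeness of $\omega(\proj_{\R^n}(C))$ only on controlling a discretization error. Write $D=\proj_{\R^n}(C)$, and for $z\in D$ let $g(z)=\vol_d\!\big(C\cap(\{z\}\times\R^d)\big)$ be the fiber-volume function; by Brunn--Minkowski $g^{1/d}$ is concave on $D$, so $g$ is in particular a log-concave density. Let $\nu$ be the Borel measure on $\R^n$ with $d\nu=g\,\mathbf 1_D\,dz$, and let $\centroid\in D$ be its barycenter. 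We will use the fact that Gr\"unbaum's inequality for log-concave measures (which follows from the one-dimensional statement by taking the marginal normal to the halfspace) gives $\nu(G)\ge\frac1e\,\nu(\R^n)$ for every halfspace $G\subseteq\R^n$ with $\centroid\in G$; crucially this is dimension-free. The candidate centerpoint is $\mathbf{x}^\ast=(z^\ast,x^\ast)$, where $z^\ast\in\Z^n\cap D$ is obtained by rounding $\centroid$ (legitimate, since a lattice width above the threshold forces interior lattice points near $\centroid$) and $x^\ast$ is the barycenter of the fiber over $z^\ast$; note $\mathbf{x}^\ast\in S$.

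Now fix a halfspace $H=\{(w,x):a^\top w+b^\top x\ge\gamma\}$ with $\mathbf{x}^\ast\in H$ and slice $\mu(H\cap S)$ over the integer coordinates. The core case is $b=0$ (and, more generally, $b$ small relative to $a$): then every fiber lies on one side of $H$ up to a thin transition layer that fiberwise Gr\"unbaum in $\R^d$ makes negligible, so $\mu(H\cap S)$ equals, up to lower-order terms, the $\mu^n$-mass of the halfspace $\{z:a^\top z\ge\gamma'\}$ through $z^\ast$, where $\mu^n$ is the $g$-weighted probability measure on $\Z^n\cap D$. We bound this mass in two moves. First, since $z^\ast$ is only a rounding of $\centroid$, it sits at some displacement from the barycenter; in the worst case it is displaced toward the side retained by $H$, which degrades the one-dimensional log-concave Gr\"unbaum bound for the $a$-marginal of $\nu$ from $\frac1e$ down to a factor that is $\ge e^{-1/c}\cdot\frac1e=e^{-1/c-1}$, provided the displacement is at most a $1/c$-fraction of the relevant scale --- which is exactly what the threshold on $\omega(D)$ buys. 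Second, one compares the lattice sum $\mu^n$ with the integral $\nu$: the total variation distance between the normalized measures is a Riemann-sum error, again of size at most $1-e^{-2/c}$ once $\omega(D)$ is large. Subtracting, $\mu^n(\{a^\top z\ge\gamma'\})\ge e^{-1/c-1}-(1-e^{-2/c})=e^{-1/c-1}+e^{-2/c}-1$. Finally, for a general halfspace with $b\ne 0$ one partitions the fibers into those lying fully on one side of $H$ (the bulk, treated exactly as above) and those genuinely cut by $H$ (the transition, where fiberwise Gr\"unbaum contributes an additional $\frac1e$-fraction that can only help), and verifies that the outcome is never worse than this core estimate.

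The step I expect to be the real obstacle is the second move: converting ``$\omega(D)$ is large'' into quantitative guarantees that $\Z^n$ is fine enough relative to $D$ for both the rounding displacement and the sum-versus-integral error to cost only the prescribed factors. This is precisely what forces the shape of the threshold. Concretely one needs, for some $\theta_n$, that $\omega(D)\ge\theta_n t$ implies a $(1-O(1/t))$-dilate of $D$ about $\centroid$ is covered by unit lattice cubes with centers in $D$ and that $g$ oscillates by a factor at most $1+O(1/t)$ on each such cube; the first part rests on the classical relations between lattice width and the inradius (or largest inscribed box) of a convex body, which are exponential in the dimension --- this is the origin of the $n^{n+1}$ factor, replaced in the present paper by a polynomial quantity --- while the second uses a John-type normalization of the fibers, which accounts for the $(n+d)^{5/2}$ factor. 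Granting all of this, the last assertion of the theorem is immediate from the definition of $\mathcal F$: if $c$ satisfies $e^{-1/c-1}+e^{-2/c}-1\ge 2^{-(n+1)}$, then the centerpoint $\mathbf{x}^\ast$ witnesses $\mathcal F(S)\ge 2^{-(n+1)}=\frac{1}{2^n}\cdot\frac12>\frac{1}{2^n}\cdot\frac1e$, which is stronger than what \Cref{conj:Oertel} demands for this $C$.
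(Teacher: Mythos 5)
First, a point of context: the paper does not prove this theorem. It is \Cref{thm:BasuOertel-LargeSets}, quoted verbatim from Basu and Oertel \cite{BasuOertel2017Centerpoints} as the prior state of the art that the paper then improves; the only computation the paper performs around it is the estimate of the smallest admissible $c$. So there is no in-paper proof to compare against, and your attempt has to be judged on its own.

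As such, your outline has the right algebraic shape for the bound ($e^{-1/c}\cdot e^{-1}$ from an approximate-centroid Gr\"unbaum, minus a $1-e^{-2/c}$ discretization loss), but it contains a genuine gap at the step you treat as routine: the reduction of a general halfspace $H=\{(w,x):a^\top w+b^\top x\ge\gamma\}$ to the ``core case'' $b=0$. Your plan applies Gr\"unbaum twice --- once to the log-concave marginal $\nu$ on $\R^n$ and once fiberwise in $\R^d$ --- and claims that the fibers genuinely cut by $H$ ``contribute an additional $\frac1e$-fraction that can only help.'' This is not true. Fiberwise Gr\"unbaum only yields a $\frac1e$-fraction of a fiber whose \emph{own} centroid lies in $H$; for transition fibers whose centroids fall just outside $H$ the retained fraction can be arbitrarily small, and when $b$ dominates $a$ essentially \emph{every} fiber is a transition fiber, so the ``bulk'' analysis via the marginal $\nu$ covers none of the mass. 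This tilted regime is exactly where the worst-case example in \Cref{sec:Intro} lives, so it cannot be waved away. The way out --- used both by Basu--Oertel and by the present paper's \Cref{thm:general_n} --- is to apply the continuous Gr\"unbaum inequality \emph{once}, in the full $(n+d)$-dimensional body, and to spend the largeness hypothesis on showing that $\vol_d(H\cap S)$ tracks $\vol_{n+d}(H\cap C)$ uniformly over halfspaces; your two-stage marginal/fiber decomposition does not recombine into that statement. Separately, the two quantitative inputs you flag yourself (that the threshold on $\omega(\proj_{\R^n}(C))$ controls both the rounding displacement and the sum-versus-integral error by exactly the factors $e^{-1/c}$ and $1-e^{-2/c}$) are asserted rather than proved, and they are where essentially all of the work in the cited proof resides; as written the argument is a plan, not a proof. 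The final deduction that $e^{-1/c-1}+e^{-2/c}-1\ge 2^{-(n+1)}$ implies \Cref{conj:Oertel} for $C$ is correct.
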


To get an idea of how large $\omega(\proj_{\R^n}(C))$ needs to be in the above theorem, notice that the expression on the right-hand side of the inequality is increasing in $c$, tends to $1/e$ as $c$ grows, and crosses zero at $c\approx 5.47$. We conclude that the threshold given in Theorem~\ref{thm:BasuOertel-LargeSets} is quite large: for the bound to be meaningful, it requires the lattice width $\omega(\proj_{\R^n}(C))$ to be at least $\Omega( (n+d)^{5/2}n^{n+2} )$.

\paragraph{Our contributions:} 
While \Cref{conj:Oertel}  remains unresolved, we focus on reducing the threshold at which a set is large enough so that we can show it holds. Our main results are: 
\begin{itemize}
    \item We prove that there exists a universal constant $\alpha>0$ such that if $\proj_{\R^n}(C)$ contains a ball of radius $k\geq \alpha d^2n^{3/2}$ (or a unimodular copy of it), then \Cref{conj:Oertel} holds (see \Cref{thm:general_n} and \Cref{rem:unimodular-ball} below).
    \item By applying a unimodular transformation, we derive a new threshold in terms of the lattice width of the set. We prove that there exists a universal constant $\alpha'>0$ such that if $\omega(\proj_{\R^n}(C))\geq \alpha' d^2n^5$, then \Cref{conj:Oertel} holds for $C$ (see \Cref{cor:Main-LatticeWidth} below).
\end{itemize}

These results improve upon \Cref{thm:BasuOertel-LargeSets} by reducing the exponential dependence on $n$ to an explicit polynomial one, and the dependence on $d$ from $d^{5/2}$ to $d^2$. This considerably enlarges the family of sets for which \Cref{conj:Oertel} holds, and provides further indications that it might hold in general.

As a warm-up, we consider the special case of $n=1$, for which the proofs are simpler. For this case, we also obtain an improved bound: there is a constant $\alpha''>0$ such that $k\geq \alpha'' d$ suffices for \Cref{conj:Oertel} to hold.

The rest of the paper is organized as follows. In \Cref{sec:Preliminaries}, we provide preliminaries and fix some important notation. In \Cref{sec:case-n1}, we present a special proof for the case $n=1$, and in \Cref{sec:general}, we present the general case and our main results. Both  \Cref{sec:case-n1} and \Cref{sec:general} are organized in the same way: we first provide an outline of the proof, then we present all the elements as technical lemmas, and finally, we provide the main theorem of the section.


\section{Preliminaries}\label{sec:Preliminaries}

In what follows, let $n,d\in \N$ and let $C\subset\R^{n+d}$ be a convex body, that is, a compact convex set with nonempty interior. We call $S = C\cap (\Z^n\times \R^d)$ the mixed-integer set induced by $C$. We denote by $B_n(x,r)$ the closed Euclidean ball in $\R^n$ centered at $x$ with radius $r$. If there is no ambiguity, we will simply write $B(x,r)$.

It will be useful to adopt the following convention: We identify $\R^{n+d}$ with $\R^n\times\R^d$.  We use the letters $z,w$ to denote elements of $\R^n$ and letters $x,y$ to denote elements of $\R^d$. We use bold characters $\mathbf{x},\mathbf{y}$ to denote elements of $\R^n\times\R^d$. Then, for every $\mathbf{x}\in\R^{n+d}$, we write $\mathbf{x} = (z,x) \in\R^n\times\R^{d}$. 

For a set $A\subset\R^p$, we denote by $\mathrm{conv}(A)$ and by $\mathrm{aff}(A)$ the convex and affine hull respectively. We also write $\mathrm{dist}(\cdot,A)$ to denote the distance function to $A$. If $A$ is convex and closed, we also write $\proj_{A}$ to denote its metric projection. 

We formalize the notion of volume for general sets using Hausdorff measures. For every $r\geq 0$, we denote by $\vol_r$ the $r$-dimensional Hausdorff measure (see, e.g., \cite[Definition 2.1]{Evans2015Measure}). When $r$ is an integer, $\vol_r$ coincides with the $r$-dimensional Lebesgue measure over each affine space of dimension $r$. For any set $K\subset \R^{n+d}$ we denote by $\dim K$ its Hausdorff dimension (see, e.g., \cite[Definition 2.2]{Evans2015Measure}) which is given as
\[
\dim K = \inf\{ r>0\ :\ \vol_r(K) = 0\}.
\]
When $K$ is a convex set, $\dim K$ coincides with the dimension of the affine hull of $K$, $\mathrm{aff}(K)$. Thus, if $K$ is a convex body and $\dim K = q$, then the usual $q$-dimensional volume of $K$ coincides $\mathcal{H}_q(K)$. When $K$ is a disjoint union of convex sets (as is the case for mixed-integer convex sets), the Hausdorff dimension of $K$ is the maximum Hausdorff dimension of its convex components. In particular, if the convex body $C\subset{\R^{n+d}}$ has at least one interior point $\mathbf{x} = (z,x)$ with $z\in\Z^n$, then $\dim S = d$. Thus, in this case, the measure $\mu(\cdot)$ defined in \Cref{eq:MixedInteger-Volume} coincides with the normalization of $\mathcal{H}_d$, that is,
\[
\forall A\subset S,\quad \mu(A) = \frac{\sum_{z\in\Z^n} \mathrm{vol}_d(A\cap (\{z\}\times \R^d))}{\sum_{z\in\Z^n} \mathrm{vol}_d(S\cap (\{z\}\times \R^d))} = \frac{\vol_d(A)}{\vol_d(S)}.
\]
In general, for every compact set $K$,
\[
\vol_q(K) = \begin{cases}
    +\infty\quad&\text{ if }q <\dim K,\\
    V\in [0,+\infty)\quad&\text{ if }q =\dim K,\\
    0\quad&\text{ if }q >\dim K.
\end{cases}
\]
Thus, the Hausdorff measure $\mathcal{H}^q$ provides a good extension of the $q$-dimensional volume of an arbitrary compact set. Moreover, the ``intrinsic'' volume of a set $K$ is given by $\mathcal{H}_{\dim K}(K)$. For further information on Hausdorff measures, we refer the reader to \cite{Evans2015Measure}.

For a convex set $K$ of dimension $q = \dim K$, we define the centroid of $K$
\begin{equation}
    \centroid(K) = \frac{1}{\vol_q(K)}\int_{K} x \;d\vol_q(x).
\end{equation}
Note that the centroid is a vector in $\mathrm{aff}(K)$ and, since $K$ is convex, it is contained in $K$. 

\subsection{Some facts about (truncated) cones}
\label{sec:cone}

In this work, we use a somewhat unconventional definition of a cone that is sometimes used in convex geometry (see, e.g., \cite{Ball1997:AnElementary}, or the original work of Gr\"{u}nbaum \cite{Grunbaum1960Partitions}), and is very useful for our analysis. We call a set $K\subset \R^p$ (with $p\geq 2$) a \emph{cone} if it is the convex hull of a point $x\in \R^p$ and a compact convex set $B$ such that $x\notin\mathrm{aff}(B)$. This is a generalization of the notion of a three-dimensional geometric cone, also called a pyramid when the base is a polytope. We insist it does not correspond to the usual definition of a cone used in linear algebra and convex analysis. 

To ease the notation and to emphasize the base and apex of the cone, we write $K = \cone(x,B)$, and define its height as $h = \mathrm{dist}(x,\mathrm{aff}(B))$. We recall the following facts about cones, which will be used in the sequel.

First, the volume of a cone $K=\cone(x,B)$ with $q=\dim B$ and $q+1=\dim K$ is given by
    \begin{equation}
        \vol_{q+1}(K) = \frac{1}{q+1}\cdot h\cdot \vol_q(B),
    \end{equation}
    where $h$ is the height of $K$.

Now, let $K' = \cone(x,B')$ be a subcone of $K$ with height $h' < h$, that is, $\mathrm{aff}(B')$ is parallel to $\mathrm{aff}(B)$ with $B' = K\cap \mathrm{aff}(B')$ and $h' = \mathrm{dist}(x,\mathrm{aff}(B'))$; see Figure \ref{fig:Cone-and-Subcone}. In this case, the volumes of $K$ and $K'$ satisfy
    \begin{equation}
        \vol_{q+1}(K') = \left(\frac{h'}{h}\right)^{q+1}\vol_{q+1}(K).
    \end{equation}

\begin{figure}[ht]
    \centering
    
\begin{tikzpicture}

\def\radius{2}    
\def\height{4}    
\def\cutheight{1.5} 
\def\cutradius{2.5/4*\radius} 

\draw[dashed,fill=gray!20!white, opacity=0.5] (-\radius, 0) arc[start angle=180, end angle=0, x radius=\radius, y radius=0.5*\radius];
\draw[fill=gray!20!white, opacity=0.5] (-\radius, 0) arc[start angle=180, end angle=360, x radius=\radius, y radius=0.5*\radius];

\draw (-2, \height) -- (-\radius, 0);
\draw (-2, \height) -- (\radius, 0);

\draw[dashed,fill=gray!20!white, opacity=0.5] (-0.75-\cutradius, \cutheight) arc[start angle=180, end angle=0, x radius=\cutradius, y radius=0.4*\cutradius];
\draw[fill=gray!20!white, opacity=0.5] (-0.75-\cutradius, \cutheight) arc[start angle=180, end angle=360, x radius=\cutradius, y radius=0.4*\cutradius];
 
\draw[<->] (-3, \cutheight) -- (-3, 0);
\draw[<->] (-3, \height) -- (-3, \cutheight) node[midway, right] {$h'$};
\node at (0, 0.3) {$B$};
\node at (-0.75, 1.6) {$B'$};
\draw [thick, blue,decorate,decoration={brace,amplitude=2pt,mirror},xshift=-2pt,yshift=-0.4pt](-3.1,\height) -- (-3.1,0) node[black,midway,xshift=-0.4cm] {$h$};
\draw [thick, blue,decorate,decoration={brace,amplitude=2pt,mirror},xshift=2pt,yshift=-0.4pt](-0.75+\cutradius,\cutheight) -- (-2,\height) node[black,midway,xshift=0.4cm, yshift=0.2cm] {$K'$};
\end{tikzpicture}
    \caption{Subcone $K' = \cone(x,B')$ of $K = \cone(x,B)$ with height $h'<h$.}
    \label{fig:Cone-and-Subcone}
\end{figure}
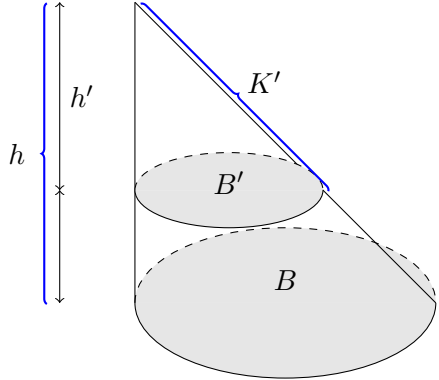

Finally, let $V$ be the (unique) affine space parallel to the base $B$ containing the centroid of $K$. Then, the height $h'$ of the subcone $K' = \cone(x, V\cap K)$ is proportional to the height of $K$, verifying that $h' = \frac{\dim K}{1+\dim K}h$. In other words, the centroid of $K$ divides the height in the proportion $1:\dim K$.

\subsection{Oertel radius}\label{subsec:OertelRadius}

Let $K\subseteq\R^p$. For $x\in K$ and $u \in \R^p\setminus\{0\}$ we define $H(u,x) = \{ y\in\R^p\,:\, u^{\top}(y-x)\geq 0 \}$. Let $q=\dim K$. We can now rewrite the definition of \emph{Oertel radius} of $K$ as
\begin{equation}\label{eq:OertelRadius}
\mathcal{F}(K) = \sup_{x\in K}\inf_{u\in \R^p\setminus\{0\}} \frac{\vol_{q}(H(u,x)\cap K)}{\vol_q(K)}.
\end{equation}

Note that we consider the $(\dim K)$-Hausdorff measure of $H(u,x)\cap K$ rather than its volume in its own dimension, to put zero measure on the extreme cases where $H(u,x)\cap K$ has lower dimension. Also note that when $K$ is a mixed-integer convex body, the Oertel radius is attained by some elements of $K$, as we discussed in the introduction (see \cite{BasuOertel2017Centerpoints}).


\section{The case $n=1$}\label{sec:case-n1}

Throughout this section, we consider $n=1$, that is, $C\subseteq \R^{d+1}$ and $S=C\cap (\Z\times \R^d)$. Without loss of generality, we assume that $\proj_{\R}(C) = [0,k]$, by replacing $C$ with $\conv(S)$ and moving the origin if necessary. We denote the connected components of $S$ by $S_0,\dots,S_k$, ordered by increasing integral coordinate. Finally, for $i\in\{0,\ldots,k-1\}$, we denote by $C_i$ the section of width $1$ of $C$ defined as $C_i= C\cap \{ (z,x)\in \R^{d+1}: z\in [i,i+1]\}$. By convention, we set $C_k=\emptyset$.

Our goal is to prove that there exists a universal constant $\alpha>0$ such that
\begin{equation}
k\geq \alpha d \implies \mathcal{F}(S) \geq \frac{1}{2}\left(\frac{d}{d+1}\right)^d,
\end{equation}
see Theorem \ref{thm:Main-OneDimension} below. To do so, our strategy can be split in the following steps:
\begin{enumerate}
    \item First, we prove that the volume of each section $C_i$ is small with respect to the volume of $C$ when $k$ is large. This is Lemma \ref{lem:single_face_bound}.

    \item We show that $\vol_{d}(S)$ is well approximated by $\vol_{d+1}(C)$ when $k$ is large. We do so by building inner and outer approximations of each $C_i$ using cones built from either $S_i$ or $S_{i+1}$ and projecting to extreme points, either in $S_0$ or in $S_k$.  This is \Cref{lem:upper_and_lower_bound_n_1}.

    \item Finally, we show that, by losing only a small fraction of the volume, we can move the centroid of $C$ so that it belongs to $S$. This is Lemma \ref{lem:smaller_set_integral_centroid}.
\end{enumerate}

By carefully balancing all these bounds and the associated errors, we proceed as follows: we take $\bar{\mathbf{x}}$ to be the moved centroid that has integer projection (and belongs to the interior of $C$). We then apply Gr\"{u}nbaum's Inequality \eqref{eq:GrunbaumInequality} and the lemmas to show that, for any halfspace $H$ containing the point $\bar{\mathbf{x}}$,
\[
 \frac{\vol_d(H\cap S)}{\vol_d(S)}\geq \left(1-O(d/k)\right)\frac{\vol_{d+1}(H\cap C)}{\vol_{d+1}(C)} \geq \frac{1}{e} -O(d/k).
\]
Thus, leveraging the inequality $\frac{1}{e} > \frac{1}{4} \geq \frac{1}{2}\left(\frac{d}{d+1}\right)^d$, we conclude that it is enough that $k\geq \alpha d$ for some universal $\alpha$ to conclude the bound of the conjecture.

\subsection{Technical Lemmas for the case $n=1$}

In this section, we present the three technical lemmas described in the outline of the proof. Throughout this section, we consider $k\gg d$ so that $1-O(d/k)$ is positive, $O(d/k)^2 = O(d/k)$ and $(1+O(d/k))^{-1} = 1-O(d/k)$.

\begin{lemma}\label{lem:single_face_bound}
    For every $C_i$, we have that
    \[
    \vol_{d+1}(C_i)\leq O\left( \frac{d}{k} \right)  \cdot \vol_{d+1}(C).
    \]
\end{lemma}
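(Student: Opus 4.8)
The plan is to bound the volume of a single unit-width section $C_i$ by comparing it to the whole body $C$, using the fact that $C$ has lattice-width (equivalently, length along the first coordinate) equal to $k$, so $C$ must ``spread out'' over the full interval $[0,k]$. The cleanest way I would do this is via a comparison with cones. Since $C = \conv(S)$ and $\proj_{\Z}(S) = \{0,\dots,k\}$, there exist points $\mathbf{p}_0 \in S_0 \subseteq C\cap(\{0\}\times\R^d)$ and $\mathbf{p}_k \in S_k \subseteq C\cap(\{k\}\times\R^d)$. Fix the section $C_i$, which lives over the slab $\{z\in[i,i+1]\}$. Let $B_i$ denote the $d$-dimensional slice $C\cap(\{i\}\times\R^d)$ and $B_{i+1}$ the slice $C\cap(\{i+1\}\times\R^d)$; at least one of these is a $d$-dimensional convex body with $\vol_d(\cdot) > 0$ once $C$ has an interior point with integral first coordinate, and in any case $\vol_{d+1}(C_i) \le \vol_d(B_i) + \vol_d(B_{i+1})$ is not quite what I want — instead I would directly observe that $\vol_{d+1}(C_i) \le \max(\vol_d(B_i),\vol_d(B_{i+1}))$ by Fubini and convexity (the slices of $C_i$ interpolate between $B_i$ and $B_{i+1}$, and the volume function of slices of a convex body is unimodal — actually $\vol_d(\cdot)^{1/d}$ is concave by Brunn–Minkowski, so the max of the slice volumes over $[i,i+1]$ is attained at an endpoint). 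Thus it suffices to bound $\vol_d(B_i)$ for each $i$.

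Next, the key geometric step: for each $i$, the cone $K_i := \conv(\{\mathbf{p}_k\} \cup B_i)$ is contained in $C$ (by convexity), has height $k-i$ measured along the first coordinate, and base $B_i$, so $\vol_{d+1}(K_i) = \frac{1}{d+1}(k-i)\vol_d(B_i)$. Symmetrically, $\conv(\{\mathbf{p}_0\}\cup B_i) \subseteq C$ has volume $\frac{1}{d+1}\,i\,\vol_d(B_i)$. These two cones meet only in the hyperplane $\{z=i\}$ (which has measure zero), so their union sits inside $C$ and
\[
\vol_{d+1}(C) \;\ge\; \frac{1}{d+1}\bigl((k-i) + i\bigr)\vol_d(B_i) \;=\; \frac{k}{d+1}\,\vol_d(B_i),
\]
giving $\vol_d(B_i) \le \frac{d+1}{k}\vol_{d+1}(C)$. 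Applying the same to $B_{i+1}$ and combining with the first paragraph's reduction $\vol_{d+1}(C_i)\le \max(\vol_d(B_i),\vol_d(B_{i+1}))$ yields $\vol_{d+1}(C_i) \le \frac{d+1}{k}\vol_{d+1}(C) = O(d/k)\cdot\vol_{d+1}(C)$, as claimed.

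The main obstacle — or rather the only point needing care — is the reduction from the $(d+1)$-dimensional volume of the unit slab $C_i$ to the $d$-dimensional volume of a single hyperplane slice. One must be careful that $C_i$ need not be a cone or a prism; it is a general convex body whose slices $C\cap(\{t\}\times\R^d)$ for $t\in[i,i+1]$ vary. The clean resolution is Brunn–Minkowski: $t\mapsto \vol_d(C\cap(\{t\}\times\R^d))^{1/d}$ is concave on the interval where $C$ has full-dimensional slices, hence $t\mapsto\vol_d(C\cap(\{t\}\times\R^d))$ is unimodal, so on $[i,i+1]$ its maximum is attained at $t=i$ or $t=i+1$; then $\vol_{d+1}(C_i) = \int_i^{i+1}\vol_d(C\cap(\{t\}\times\R^d))\,dt \le \max(\vol_d(B_i),\vol_d(B_{i+1}))$. (Alternatively, and perhaps more simply, one bounds $\vol_{d+1}(C_i)$ directly: $C_i \subseteq \conv(B_i\cup B_{i+1})$, a ``prismatoid'', whose volume is at most $\max(\vol_d(B_i),\vol_d(B_{i+1}))$ by splitting it into two cones over the larger base — but this needs the slices to be nested appropriately, so the Brunn–Minkowski route is safest.) Everything else is the cone volume formula from the preliminaries and a one-line convexity argument, so I expect the proof to be short.
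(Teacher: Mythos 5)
Your central cone computation is correct and is genuinely different from the paper's argument: the paper compares $C_i$ with the annulus between the cone $L=\cone((0,0),S_i)$ and its dilation $\frac{i+1}{i}\cdot L$, whereas you bound a hyperplane slice by fitting two opposite cones (apexes over $z=0$ and $z=k$) over that slice inside $C$, which gives the clean explicit bound $\vol_d(B_i)\le\frac{d+1}{k}\vol_{d+1}(C)$. However, your reduction from $\vol_{d+1}(C_i)$ to $\max(\vol_d(B_i),\vol_d(B_{i+1}))$ is wrong as stated. Unimodality of $t\mapsto\vol_d(C\cap(\{t\}\times\R^d))$ does \emph{not} imply that its maximum over the subinterval $[i,i+1]$ is attained at an endpoint: if the mode lies in the open interval $(i,i+1)$, the interior maximum can strictly exceed both endpoint values (Brunn--Minkowski localizes minima of the slice function to endpoints of a subinterval, not maxima). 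Your fallback, $C_i\subseteq\conv(B_i\cup B_{i+1})$, also fails in general --- for instance when $C$ has an extreme point whose first coordinate lies strictly between $i$ and $i+1$ --- so the route you declared ``safest'' is precisely the one that breaks.

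The gap is easily repaired, in a way that makes the endpoint reduction unnecessary: your two-cone argument applies verbatim to the slice at every real $t\in[0,k]$, not just at integer values, giving $\vol_d(C\cap(\{t\}\times\R^d))\le\frac{d+1}{k}\vol_{d+1}(C)$ uniformly in $t$. Integrating this over $t\in[i,i+1]$ via Fubini yields $\vol_{d+1}(C_i)\le\frac{d+1}{k}\vol_{d+1}(C)=O(d/k)\cdot\vol_{d+1}(C)$ directly. With that one-line fix your proof is complete, and arguably cleaner than the paper's, since it avoids the estimate $(1+1/i)^{d+1}-1=O(d/i)$ and produces an explicit constant.
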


\begin{proof}
    Assume without loss of generality that $(0,0)$ and $(k,0)$ belong to $C$. Assume $S_i$ is at a distance of at least $k/2$ from $(0,0)$, i.e., $i\geq k/2$ (otherwise, apply the analogous analysis from the other endpoint $(k,0)$). Since $(0,0)$ and $S_i$ are both in $C$, their convex hull $L=\cone((0,0),S_i)$ is contained in $C$. Also, we have that $C_i\subseteq ((i+1)/i)\cdot L$, as the line connecting a point in $C_i$ and $(0,0)$ is in $C$, so it must intersect $S_i$. See Figure \ref{fig:Ci-in-enlargementL}.
    
    \begin{figure}[ht]
        \centering
        \begin{tikzpicture}
    \filldraw[thick, fill=gray!70!white, fill opacity=0.1]    (0,0) 
        .. controls (1.5,1.5) and (4,2) .. (6,0.5)  
        -- (6,-0.5) 
        .. controls (2,-3) and (0.5,-1) .. (0,0); 

    \draw[dashed] (2,-4) -- (2,3);
   \draw[dashed] (3.3,-4) -- (3.3,3);
    \path [name path=lineA] (2,-4) -- (2,3);  
    \path [name path=lineB] (3.3,-4) -- (3.3,3);  
    \path [name path=upperCurve] (0,0) .. controls (1.5,1.5) and (4,2) .. (6,0.5);
    \path [name path=lowerCurve] (6,-0.5) .. controls (2,-3) and (0.5,-1) .. (0,0);

    \path [name intersections={of=lineA and upperCurve, by=upperA}];
    \path [name intersections={of=lineA and lowerCurve, by=lowerA}];
    
    \path [name intersections={of=lineB and upperCurve, by=upperB}];
    \path [name intersections={of=lineB and lowerCurve, by=lowerB}];

    \coordinate (p1) at (upperA);
    \coordinate (p2) at (lowerA);
    \coordinate (p3) at (upperB);
    \coordinate (p4) at (lowerB);

    \draw[thick,red] (0,0)--(3.28,2);
    \draw[thick,red] (0,0)--(3.28,-2.7);
    \draw[thick,red] (3.28,-2.7)--(3.28,2);
    \node[right,red] at (3.4,2){\scriptsize $((i+1)/i)\cdot L$};

   \draw[very thick,teal] (p3)--(p4);
   \draw [ decorate,decoration={brace,amplitude=4pt,mirror},xshift=0.4pt,yshift=-0.4pt](p4) -- (p3) node[black,midway,xshift=0.6cm] {\scriptsize $S_{i+1}$};
    
    \draw[very thick,teal] (p1)--(p2);
    \draw [ decorate,decoration={brace,amplitude=4pt,mirror},xshift=-1.2pt,yshift=-0.4pt](p1) -- (p2) node[black,midway,xshift=-0.5cm] {\scriptsize $S_{i}$};

    \draw[<->, thick] (0,-3.5)--(6,-3.5);
    \fill[black] (2,-3.5) circle (2pt) node[below left]{\small $i$};
    \fill[black] (3.3,-3.5) circle (2pt) node[below right]{\small $i+1$};
    \node at (2.6,0) {\scriptsize $C_i$};

    \fill[red] (0,0) circle (2pt) node[above left]{\scriptsize $(0,0)$};
\end{tikzpicture}
        \caption{Illustration that $L\subset C$ and $C_i \subset ((i+1)/i)\cdot L$.}
        \label{fig:Ci-in-enlargementL}
    \end{figure}
    
    Thus, the fraction of volume in $C_i$ with respect to $C$ must be at most the fraction of volume in $A:= (((i+1)/i)\cdot L) \setminus L$ with respect to $B:=L$. Using concavity of the function $g(t)=(t^{d+1}-1)/t^{d+1} =  1-1/t^{d+1}$ for $t>0$, which has derivative $g'(t)= (d+1)/t^{d+2}$, and noting $((i+1)/i)^d\leq ((d+1)/d)^d \leq e$ we get that
    \begin{align*}
     \frac{\mathcal{H}_{d+1}(C_i)}{\mathcal{H}_{d+1}(C) }\leq\frac{\mathcal{H}_{d+1}(A)}{\mathcal{H}_{d+1}(B) } &= e\frac{
    \left( 1+1/i\right)^{d+1} -1}{\left( 1+1/i\right)^{d+1}} 
    \leq g'(1)\cdot \frac{e}{i}
    = \frac{e(d+1)}{i} \leq \frac{2e(d+1)}{k}
    = O(d/k).
    \end{align*}
    
\end{proof}


\begin{lemma}
\label{lem:upper_and_lower_bound_n_1}
    We have that
    \[
    \left(1-O\left(\frac{d}{k}\right)\right) \vol_{d+1} (C) \leq \vol_d(S)\leq \left(1+O\left(\frac{d}{k}\right)\right) \vol_{d+1} (C).
    \]
\end{lemma}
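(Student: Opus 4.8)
The plan is to sandwich $\vol_d(S) = \sum_{i=0}^{k}\vol_d(S_i)$ between two comparisons with $\vol_{d+1}(C) = \sum_{i=0}^{k-1}\vol_{d+1}(C_i)$, using the fact that each slab $C_i$ is, on one hand, contained in a cone over one of its two bounding fibers with apex at a far extreme point of $S$, and on the other hand contains a cone over one of those fibers. Concretely, fix $i$ and suppose $i \geq k/2$ (so the apex $(0,0)$ is far); the symmetric case uses $(k,0)$. For the \emph{upper bound on $\vol_{d+1}(C_i)$ in terms of $\vol_d(S_{i+1})$}: by convexity $C_i \subseteq \cone((0,0), S_{i+1})$ restricted to the slab $z\in[i,i+1]$, which is the difference of two homothetic cones of heights $i$ and $i+1$ over $S_{i+1}$; its volume is $\frac{(i+1)^{d+1}-i^{d+1}}{(i+1)^{d+1}}\cdot\frac{i+1}{d+1}\vol_d(S_{i+1}) \le \frac{1}{d+1}\big((1+1/i)^{d+1}-1\big)(i+1)\vol_d(S_{i+1}) = (1+O(d/k))\vol_d(S_{i+1})$, using $(1+1/i)^{d+1}-1 \le g'(1+1/i)/i \le (d+1)(1+1/i)^d/i = O(d/k)$ exactly as in Lemma \ref{lem:single_face_bound}. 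Hmm, more carefully: $\frac{1}{d+1}\big((1+1/i)^{d+1}-1\big)(i+1) = \frac{i+1}{d+1}\cdot\frac{d+1}{i}(1+O(d/i))^{d} = \frac{i+1}{i}(1+O(d/k)) = 1+O(d/k)$, so indeed $\vol_{d+1}(C_i) \le (1+O(d/k))\vol_d(S_{i+1})$. Summing over $i$ (and handling the two halves by the appropriate extreme point) gives $\vol_{d+1}(C) \le (1+O(d/k))\vol_d(S)$, i.e. the left inequality of the lemma after rearranging $(1+x)^{-1} \ge 1-x$.

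For the \emph{lower bound on $\vol_{d+1}(C_i)$}: again with $i \ge k/2$, the cone $\cone((0,0), S_i)$ is contained in $C$, and intersecting it with the slab $z \in [i,i+1]$ but \emph{capping at $z = i+1$} we get a genuine subcone-difference sitting inside $C_i$; more simply, $C_i \supseteq \conv(S_i \cup S_{i+1}) \supseteq \conv(S_i \cup \{(i+1,c_i)\})$ where $c_i$ is the centroid of $S_i$, a cone of height $1$ over $S_i$, so $\vol_{d+1}(C_i) \ge \frac{1}{d+1}\vol_d(S_i)$. That alone is too lossy by a factor $d+1$, so instead I use the cone-over-far-point inclusion in reverse: $C_i \supseteq \cone((0,0),S_i)\cap\{z\in[i,i+1]\}$, whose volume is $\frac{(i+1)^{d+1}-i^{d+1}}{i^{d+1}}\cdot\frac{i}{d+1}\vol_d(S_i) = \frac{i}{d+1}\big((1+1/i)^{d+1}-1\big)\vol_d(S_i) = (1-O(d/k))\vol_d(S_i)$ by the same estimate (now with the lower bound $(1+1/i)^{d+1}-1 \ge (d+1)/i$). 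Summing gives $\vol_{d+1}(C) \ge (1-O(d/k))\vol_d(S)$, the right inequality.

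The one genuinely delicate point is the bookkeeping of the \emph{endpoint fibers} and which extreme point to project from: for $i \ge k/2$ we compare $C_i$ with a cone from $(0,0)$ and pick up a factor involving $\vol_d(S_i)$ or $\vol_d(S_{i+1})$, but the sum $\sum_{i \ge k/2}\vol_d(S_i)$ and $\sum_{i<k/2}\vol_d(S_{i+1})$ must be reassembled into $\sum_{i=0}^k\vol_d(S_i)$ without double counting or dropping $\vol_d(S_0)$ and $\vol_d(S_k)$. I expect this to cost at most one extra additive $\vol_d(S_i)$ term, which by Lemma \ref{lem:single_face_bound} (applied to the relevant $C_i$, whose slab volume dominates $\frac{1}{d+1}\vol_d$ of an adjacent fiber up to the cone inclusions already established) is itself $O(d/k)\vol_{d+1}(C)$, so it gets absorbed into the error. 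The other mild subtlety is that all these cone inclusions require $i \ge 1$ (the apex must not lie in $\mathrm{aff}(S_i)$), but since $k \ge \alpha d \ge 1$ this is automatic, and the slabs nearest each extreme point are handled from the opposite extreme point where the relevant index is $\ge k/2$. Everything else is the convexity-of-$t^{d+1}$ estimate already used in Lemma \ref{lem:single_face_bound}, now applied in both directions.
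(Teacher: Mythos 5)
Your overall strategy---sandwiching each unit slab $C_i$ between cone frustums with apex at a far extreme point, so that heights $\geq k/2$ make the frustum volume a $(1\pm O(d/k))$ multiple of a fiber volume, plus endpoint bookkeeping via \Cref{lem:single_face_bound}---is exactly the paper's. However, both of your key inclusions point the wrong way, and the per-slab inequalities you derive from them are false. For an apex $\mathbf{x}_0\in S_0$ and $i\geq k/2$, convexity says that every $\mathbf{p}\in C_i$ lies on the segment from $\mathbf{x}_0$ to $\mathbf{p}$, which crosses the hyperplane $z=i$ inside $S_i$; hence the correct \emph{outer} approximation is $C_i\subseteq \cone_\infty(\mathbf{x}_0,S_i)\cap([i,i+1]\times\R^d)$, anchored at the face of $C_i$ \emph{nearer} the apex, yielding $\vol_{d+1}(C_i)\leq (1+O(d/k))\vol_d(S_i)$. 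Your claimed outer inclusion $C_i\subseteq \cone((0,0),S_{i+1})$ restricted to the slab is not implied by convexity: the ray from the apex through $\mathbf{p}$ must be extended \emph{beyond} $\mathbf{p}$ to reach $z=i+1$, and may leave $C$. The inequality $\vol_{d+1}(C_i)\leq(1+O(d/k))\vol_d(S_{i+1})$ it would give is genuinely false: for $C=\conv\{(0,1),(0,-1),(k,0)\}\subset\R^2$ and $i=k-2\geq k/2$ one has $\vol_2(C_{k-2})=3/k$ while $\vol_1(S_{k-1})=2/k$, a ratio of $3/2$. Symmetrically, your \emph{inner} approximation $C_i\supseteq \cone((0,0),S_i)\cap\{z\in[i,i+1]\}$ is vacuous if $\cone$ denotes the compact cone (which lies entirely in $z\leq i$) and has the inclusion reversed if it denotes the infinite cone (which is what your volume formula computes); the correct inner body is the frustum of the compact cone over the \emph{far} face, $\cone(\mathbf{x}_0,S_{i+1})\cap([i,i+1]\times\R^d)\subseteq C_i$, giving $\vol_{d+1}(C_i)\geq(1-O(d/k))\vol_d(S_{i+1})$. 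Your version $\vol_{d+1}(C_i)\geq(1-O(d/k))\vol_d(S_i)$ fails in the same example at $i=k-1$, where the ratio is $1/2$.

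Once the two pairings are swapped---outer frustum over the near face via $\cone_\infty$, inner frustum over the far face via the compact cone---your volume computations and the endpoint bookkeeping you describe go through and reproduce the paper's proof; the one duplicated (resp.\ missing) slab near $i=\lfloor k/2\rfloor$ is absorbed via \Cref{lem:single_face_bound} exactly as you anticipate.
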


\begin{proof}
    Fix points $\mathbf{x}_0 \in S_0$ and $\mathbf{x}_k \in S_k$. For the upper bound, we create a lower bound on $\vol_{d+1}(C)$ using $\vol_d(S)$. For each $S_i$, the intersection of $\cone(\mathbf{x}_0,S_i)$ with $[i-1,i]\times \R^d$ is contained in $C_{i-1}$. Similarly, the intersection of $\cone(\mathbf{x}_k,S_i)$ with  $[i,i+1]\times \R^d$ is contained in $C_i$. For each $i$ we choose the side that is farthest from the corresponding vertex, so we define
    \[
    \check{K}_i = \begin{cases}
        \cone(\mathbf{x}_0,S_i) \cap ([i-1,i]\times \R^d) \quad&\text{ if }i>k/2,\\
        \cone(\mathbf{x}_k,S_i)\cap  ([i,i+1]\times \R^d )\quad&\text{ if }i\leq k/2.
    \end{cases}
    \]
    These sets provide lower bounds for each $C_i$, with the section $i=\lfloor k/2 \rfloor$ counted twice. See \Cref{fig:InnerApproximation-Cones}. Also, because the heights of the cones are all at least $k/2$, we have  that $\vol_{d+1}(\check{K}_i) \geq (1-O(d/k)) \vol_d(S_i)$, so,
    \begin{align*}
        \left(1-O\left(\frac{d}{k}\right)\right) \cdot \vol_d(S) &\leq  \sum_{i=0}^k \vol_{d+1} (\check{K}_i)\\
        &\leq  \sum_{i=0}^{k-1} \vol_{d+1} (C_i) + \vol_{d+1}\left(C_{\lfloor k/2\rfloor}\right)\leq \vol_{d+1}(C) + O\left(\frac{d}{k}\right)\cdot \vol_{d+1} (C),
    \end{align*}
    where in the last inequality we used \Cref{lem:single_face_bound} for the extra section $C_{\lfloor k/2 \rfloor}$ counted twice. This concludes the proof of the upper bound.

    \begin{figure}[ht]
    \begin{subfigure}[t]{0.5\textwidth}
        \centering
        \begin{tikzpicture}[scale=0.9]
    \draw[name path=convexo, fill=gray!50!white, fill opacity=0.1] (0,0) 
        .. controls (1.5,1.5) and (4,2) .. (6.5,0.5)  
        -- (6.5,-0.5) 
        .. controls (2,-3) and (0.5,-1) .. (0,0); 

    \draw[dashed] (0,-2.5) -- (0,2.5);
    \foreach \x in {1,2,3,4,5,6,7,8,9,10,11,12,13} {
        \draw[name path=line\x, dashed] (0.5*\x,-2.5) -- (0.5*\x,2.5);
        
        \path[name intersections={of=convexo and line\x, by={A\x, B\x}}];
        
    }
    \foreach \x in {7,8,9,10,11,12} {
        \begin{scope}
		\clip (0.5*\x-0.5,-2.5) -- (0.5*\x-0.5,2.5) -- (0.5*\x,2.5) -- (0.5*\x,-2.5)--cycle; 
		
		\filldraw[thick,blue, pattern=north west lines, pattern color = blue, fill opacity = 0.3] (0,0)--(A\x)--(B\x)--cycle;
		\end{scope}
    }
    \begin{scope}
		\clip (6,-2.5) -- (6,2.5) -- (6.5,2.5) -- (6.5,-2.5)--cycle; 
		
		\filldraw[thick,blue, pattern=north west lines, pattern color = blue, fill opacity = 0.3] (0,0)--(6.5,0.5)--(6.5,-0.5)--cycle;
		\end{scope}
    \foreach \x in {1,2,3,4,5,6} {
        \begin{scope}
		\clip (0.5*\x,-2.5) -- (0.5*\x,2.5) -- (0.5*\x+0.5,2.5) -- (0.5*\x+0.5,-2.5)--cycle; 
		
	\filldraw[thick,red, pattern=north west lines, pattern color = red, fill opacity = 0.3] (6,0)--(A\x)--(B\x)--cycle;
		\end{scope}
    }
    \draw[red,thick] (0,0)--(0.5,0);
    
    \draw[<->, thick] (-0.5,-2.5)--(7,-2.5);
    \foreach \x in {0,1,2,3,4,5,6}{
    \fill[black] (0.5*\x,-2.5) circle (2pt) node[below left]{\tiny \x};
    }
    
    \foreach \x in {7,8,9,10,11,12,13}{
    \fill[black] (0.5*\x,-2.5) circle (2pt) node[below right]{\tiny \x};
    }
    
    \fill[blue] (0,0) circle (2pt) node[below left]{\tiny $x_0$};
    \fill[red] (6.5,0) circle (2pt) node[below right]{\tiny $x_k$};
    
    \draw (3.25,-2.45)--(3.25,-2.55);
    \node[below] at (3.25,-2.55){\tiny$\frac{k}{2}$};
\end{tikzpicture}
        \caption{Inner approximation.}
        \label{fig:InnerApproximation-Cones}
    \end{subfigure}
        \begin{subfigure}[t]{0.5\textwidth}
        \begin{tikzpicture}[scale=0.9]
    \draw[name path=convexo, fill=gray!50!white, fill opacity=0.1] (0,0) 
        .. controls (1.5,1.5) and (4,2) .. (6.5,0.5)  
        -- (6.5,-0.5) 
        .. controls (2,-3) and (0.5,-1) .. (0,0); 

    \draw[dashed] (0,-2.5) -- (0,2.5);
    \foreach \x in {1,2,3,4,5,6,7,8,9,10,11,12,13} {
        \draw[name path=line\x, dashed] (0.5*\x,-2.5) -- (0.5*\x,2.5);
        
        \path[name intersections={of=convexo and line\x, by={A\x, B\x}}];
        
    }
    \foreach \x in {7,8,9,10,11,12} {
        \begin{scope}
		\clip (0.5*\x,-2.5) -- (0.5*\x,2.5) -- (0.5*\x + 0.5,2.5) -- (0.5*\x +0.5,-2.5)--cycle; 
		
		\filldraw[thick,blue, pattern=north west lines, pattern color = blue, fill opacity = 0.3] (0,0)--($ (0,0)!1.2!(A\x) $)--($ (0,0)!1.2!(B\x) $)--cycle;
		\end{scope}
    }
    \begin{scope}
		\clip (6.5,-2.5) -- (6.5,2.5) -- (7,2.5) -- (7,-2.5)--cycle; 
		
		\filldraw[thick,blue, pattern=north west lines, pattern color = blue, fill opacity = 0.3] (0,0)--($(0,0)!1.2!(6.5,0.5)$)--($(0,0)!1.2!(6.5,-0.5)$)--cycle;
		\end{scope}
    \foreach \x in {1,2,3,4,5,6} {
        \begin{scope}
		\clip (0.5*\x-0.5,-2.5) -- (0.5*\x-0.5,2.5) -- (0.5*\x,2.5) -- (0.5*\x,-2.5)--cycle; 
		
	\filldraw[thick,red, pattern=north west lines, pattern color = red, fill opacity = 0.3] (6,0)--($(6,0)!1.2!(A\x)$)--($(6,0)!1.2!(B\x)$)--cycle;
		\end{scope}
    }
    \draw[red,thick] (-0.5,0)--(0,0);
    
    \draw[<->, thick] (-0.5,-2.5)--(7,-2.5);
    \foreach \x in {0,1,2,3,4,5,6}{
    \fill[black] (0.5*\x,-2.5) circle (2pt) node[below left]{\tiny \x};
    }
    
    \foreach \x in {7,8,9,10,11,12,13}{
    \fill[black] (0.5*\x,-2.5) circle (2pt) node[below right]{\tiny \x};
    }
    
    \fill[blue] (0,0) circle (2pt) node[below left]{\tiny $x_0$};
    \fill[red] (6.5,0) circle (2pt) node[below right]{\tiny $x_k$};
    
    \draw (3.25,-2.45)--(3.25,-2.55);
    \node[below] at (3.25,-2.55){\tiny$\frac{k}{2}$};
\end{tikzpicture}
        \caption{Outer approximation.}        \label{fig:OuterApproximation-Cones}
        \end{subfigure}
        \caption{Illustrations of inner and outer approximations from the proof of \Cref{lem:upper_and_lower_bound_n_1}. The section $C_{\lfloor k/2\rfloor}$ is approximated twice in the inner approximation or is missing in the outer approximation.}
    \end{figure}

    For the lower bound, we do the opposite. For a point $\mathbf{x}$ and a set $A$, define
    \[
    \cone_\infty(\mathbf{x},A) = \{ \mathbf{x} + t\cdot (\mathbf{y} - \mathbf{x}): t\geq 0, \mathbf{y}\in A\}.
    \]
    With this, we can define
        \[
    \hat{K}_i = \begin{cases}
        \cone_\infty(\mathbf{x}_0,S_i) \cap ([i,i+1] \times \R^d ) \quad&\text{ if }i>k/2,\\
        \cone_\infty(\mathbf{x}_k,S_i) \cap ( [i-1,i]\times \R^d ) \quad&\text{ if }i\leq k/2.
    \end{cases}
    \]
    In other words, we define $\hat{K}_i$ by projecting $S_i$ to the opposite side from $\check{K}_i$. By convexity, we have that for $i>k/2$, $C_i\subseteq \hat{K}_i$, and for $i\leq k/2$, $C_{i-1}\subseteq \hat{K}_i$. Thus, we get upper bounds for each $C_i$, except for $i=\lfloor k/2 \rfloor$. See \Cref{fig:OuterApproximation-Cones}. Similar to the upper-bound argument, we get that
    \begin{align*}
        \left(1+O\left(\frac{d}{k}\right)\right)\cdot \vol_d(S) &\geq \sum_{i=0}^k \vol_{d+1} (\hat{K}_i)\\&\geq \vol_{d+1}(C) - O\left(\frac{d}{k}\right) \cdot \vol_{d+1}(C),
    \end{align*}
    where again we used \Cref{lem:single_face_bound} for the missing section. This concludes the proof of the lower bound.
\end{proof}


\begin{lemma}
    \label{lem:smaller_set_integral_centroid}
    There is a convex body $C'\subseteq C$ with centroid in $\Z\times \R^d$ and such that
    \[
    \vol_{d+1}(C\setminus C') \leq O\left(\frac{d}{k}\right) \cdot \vol_{d+1}(C).
    \]
\end{lemma}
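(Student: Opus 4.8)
The plan is to bypass hyperplane cuts and simply let $C'$ be a suitably chosen homothetic copy of $C$; the volume loss is then controlled in one line by Bernoulli's inequality, and the whole point reduces to anchoring the homothety on the far side of the centroid.

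Write $\centroid(C)=(\bar z,\bar x)$, so $\bar z\in(0,k)$. If $\bar z\in\Z$ we are done with $C'=C$, so assume $\{\bar z\}:=\bar z-\lfloor\bar z\rfloor\in(0,1)$. Since $\bar z+(k-\bar z)=k$, at least one of $\bar z\ge k/2$ and $k-\bar z\ge k/2$ holds (and in the relevant range $k\ge\alpha d$ one has $k\ge 2$). If $\bar z\ge k/2$ (so $\bar z\ge 1$ and $\lfloor\bar z\rfloor\ge 1$), I would pick any point $\mathbf p\in S_0$ — this set is nonempty since $0\in\proj_{\Z}(S)$, and its first coordinate is $0$ — set $\epsilon:=\{\bar z\}/\bar z\in(0,1)$, and let
\[
C':=\mathbf p+(1-\epsilon)(C-\mathbf p),
\]
the homothet of $C$ of ratio $1-\epsilon$ centred at $\mathbf p$. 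Convexity together with $\mathbf p\in C$ and $\epsilon\in(0,1)$ gives $C'\subseteq C$; and since the centroid commutes with affine maps, $\centroid(C')=(1-\epsilon)\centroid(C)+\epsilon\mathbf p$, whose first coordinate is $(1-\epsilon)\bar z=\bar z-\{\bar z\}=\lfloor\bar z\rfloor\in\Z$, so $\centroid(C')\in\Z\times\R^d$. If instead $k-\bar z\ge k/2$ (so $\bar z\le k-1$ and $\lceil\bar z\rceil\le k-1$), I would take $\mathbf p\in S_k$ (first coordinate $k$) and $\epsilon:=(1-\{\bar z\})/(k-\bar z)\in(0,1)$, which makes the first coordinate of $\centroid(C')=(1-\epsilon)\bar z+\epsilon k$ equal to $\lceil\bar z\rceil\in\Z$.

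The benefit of anchoring $\mathbf p$ on the far side of $\bar z$ is the estimate $\epsilon\le 2/k$: either $\epsilon=\{\bar z\}/\bar z\le 1/\bar z\le 2/k$ (when $\bar z\ge k/2$), or $\epsilon=(1-\{\bar z\})/(k-\bar z)\le 1/(k-\bar z)\le 2/k$ (when $k-\bar z\ge k/2$). As $C'$ is the image of $C$ under a homothety of $\R^{d+1}$ of ratio $1-\epsilon$, we have $\vol_{d+1}(C')=(1-\epsilon)^{d+1}\vol_{d+1}(C)$; combining this with Bernoulli's inequality $(1-\epsilon)^{d+1}\ge 1-(d+1)\epsilon$ gives
\[
\vol_{d+1}(C\setminus C')=\bigl(1-(1-\epsilon)^{d+1}\bigr)\vol_{d+1}(C)\ \le\ (d+1)\,\epsilon\,\vol_{d+1}(C)\ \le\ \frac{2(d+1)}{k}\,\vol_{d+1}(C)\ =\ O\!\left(\tfrac{d}{k}\right)\vol_{d+1}(C).
\]
Being a homothet of the convex body $C$, the set $C'$ is itself a convex body, so this finishes the argument. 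I do not expect a genuinely hard step here: the only point that needs care is the choice of the homothety centre — taking $\mathbf p$ at distance at least $k/2$ from $\bar z$ in the first coordinate is exactly what makes the contraction factor $1-O(1/k)$, hence the loss $O(d/k)$. (A more hands-on route — slicing $C$ by a hyperplane $\{z=s\}$ and using Gr\"unbaum's inequality together with the cone facts from Section~\ref{sec:Preliminaries} to bound how far into $C$ the level $s$ must be pushed — also works, but is longer and unnecessary.)
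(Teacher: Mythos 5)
Your proof is correct, and it takes a genuinely different (and shorter) route than the paper. The paper cuts $C$ with halfspaces $(-\infty,w]\times\R^d$ and runs a continuity argument: it tracks the first coordinate $z(C^w)$ of the centroid of the truncated set, shows via the centroid decomposition identity and Gr\"unbaum's inequality combined with \Cref{lem:single_face_bound} that $z(C_R)\geq \Omega(k/d)$, and concludes that removing only an $O(d/k)$-fraction of the volume already drags the centroid past an integer coordinate. Your homothety anchored at a point of $S_0$ or $S_k$ (whichever is at distance $\geq k/2$ from $\bar z$ in the first coordinate) replaces all of this with two exact one-line computations: the centroid of the homothet is $(1-\epsilon)\centroid(C)+\epsilon\mathbf{p}$, which lands exactly on $\lfloor\bar z\rfloor$ or $\lceil\bar z\rceil$ for the stated $\epsilon\leq 2/k$, and the volume loss is exactly $1-(1-\epsilon)^{d+1}\leq 2(d+1)/k$ by Bernoulli. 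This avoids any appeal to Gr\"unbaum's inequality or to \Cref{lem:single_face_bound}, gives an explicit constant, and is closer in spirit to the paper's treatment of the general case (\Cref{lem:shift_centroid}, which also controls the error via \Cref{lem:thales_corollary}-type homothety containments, though there the perturbation is a translation rather than a shrinking, so $C'\subseteq C$ is not preserved). The only caveat, which you already flag and which the paper's own proof shares, is that the construction degenerates for $k\leq 1$ (where $\epsilon$ could equal $1$); this is harmless since the lemma is only invoked in the regime $k\geq\alpha d$ and the $O(d/k)$ bound is vacuous for bounded $k$.
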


\begin{proof}
    For a set $A$, denote by $z(A)$ the first coordinate of the centroid of $A$. In this proof, we assume that $z(C)=0$ and we show that it is possible to continuously remove part of the set until the new set $C'$ satisfies $z(C')\leq -1$, which results in a loss of at most an $O(k/d)$-fraction of the original volume. This guarantees that, in the general case, the centroid of $C'$ can be placed at the integral coordinate $z(C') = \lfloor z(C)\rfloor \geq z(C)-1$, while losing no more than an $O(k/d)$-fraction of the volume. See \Cref{fig:Moving-centroid}.

            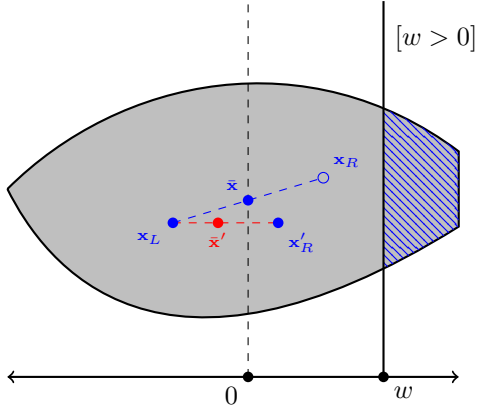
\begin{figure}[ht]
        \centering
        \begin{tikzpicture}
    \draw[thick, fill=gray!50!white, fill opacity=0.1]    (0,0) 
        .. controls (1.5,1.5) and (4,2) .. (6,0.5)  
        -- (6,-0.5) 
        .. controls (2,-3) and (0.5,-1) .. (0,0); 
    \node at (5.7,2) {\small $[w>0]$};
    \begin{scope}
		\clip (0,0) 
        .. controls (1.5,1.5) and (4,2) .. (6,0.5)  
        -- (6,-0.5) 
        .. controls (2,-3) and (0.5,-1) .. (0,0); 
		
		\fill[pattern=north west lines, pattern color = blue, fill opacity = 0.3] (5,-2) -- (6,-2)--(6,2)--(5,2)--cycle;
		\end{scope}

    \draw[dashed] (3.2,-2.5) -- (3.2,2.5);
    \draw[thick] (5,-2.5) -- (5,2.5);

    \draw[dashed,blue] (2.2,-0.45)--(4.2,0.15);
    \draw[dashed,red] (2.2,-0.45)--(3.6,-0.45);
    
    \fill[blue] (3.2,-0.15) circle (2pt) node[above left] {\tiny $c(C)$};

    \fill[blue] (2.2,-0.45) circle (2pt) node[below left] {\tiny $c(C_L)$};
    \draw[blue] (4.2,0.15) circle (2pt) node[above] {\tiny $c(C_R)$};
    \fill[blue] (3.6,-0.45) circle (2pt) node[below right] {\tiny $c(C^w_R)$};
    \fill[red] (2.8,-0.45) circle (2pt) node[below] {\tiny $c(C^w)$};

     \draw[<->, thick] (0,-2.5)--(6,-2.5);
    \fill[black] (3.2,-2.5) circle (2pt) node[below left]{\small $0$};
    \fill[black] (5,-2.5) circle (2pt) node[below right]{\small $w$};
    \end{tikzpicture}
        \caption{Illustration of the modification of the centroid after cutting a part of $C$. The centroid of $C_R^w$ is to the left of the centroid of $C_R$. Accordingly, the new centroid of $C^w$ is also moved to the left.}
        \label{fig:Moving-centroid}
    \end{figure}
    
    Let $C_L$ and $C_R$ be the partition of $C$ into the sets to the left and to the right of $\{0\}\times \R^d$. Since $z(C)=0$, we have that
    \begin{align}
        z(C_L)\cdot \vol_{d+1}(C_L) + z(C_R)\cdot \vol_{d+1}(C_R)  =0.        \label{eq:centroidequation}
    \end{align}
    For $w\geq 0$, define $C^w = C\cap ((-\infty,w]\times \R^d)$ and $C^w_R = C\cap ([0,w]\times\R^d)$. We have that
    \begin{align*}
        z(C^w) &=  z(C_L) \cdot \frac{\vol_{d+1}(C_L)}{\vol_{d+1}(C^w)} + z(C^w_R)  \cdot \frac{\vol_{d+1}(C^w_R)}{\vol_{d+1}(C^w)}\\
        &\leq z(C_L) \cdot \frac{\vol_{d+1}(C_L)}{\vol_{d+1}(C^w)} + z(C_R)  \cdot \frac{\vol_{d+1}(C^w_R)}{\vol_{d+1}(C^w)}\\
        &= - z(C_R) \cdot \frac{\vol_{d+1}(C_R)}{\vol_{d+1}(C^w)} + z(C_R) \cdot \frac{ \vol_{d+1}(C^w_R)}{\vol_{d+1}(C^w)}\\
        &= - z(C_R) \cdot\frac{\vol_{d+1}(C_R\setminus C^w_R)}{\vol_{d+1}(C^w)}\\
        &= -z(C_R) \cdot \frac{\vol_{d+1}(C\setminus C^w)}{\vol_{d+1}(C^w)} \leq -z(C_R) \cdot \frac{\vol_{d+1}(C\setminus C^w)}{\vol_{d+1}(C)},
    \end{align*}
    where in the third line we used \Cref{eq:centroidequation}. Thus, if we can take $w$ such that
    \[\frac{\vol_{d+1}(C\setminus C^w)}{\vol_{d+1}(C)} \geq \frac{1}{z(C_R)},\]
    then we get that $z(C^w) \leq -1$. By Gr\"unbaum's inequality, $C_R$ contains at least a fraction $1/e$ of the volume of $C$, and by \Cref{lem:single_face_bound}, each $C_i$ contains only an $O(d/k)$ fraction of the volume. Thus, by taking $H= \{(z,x)\mid z\leq z(C_R)\}$ and applying again Gr\"unbaum's inequality to $C_R$, we get that
    \[
    \frac{1}{e^2}\mathcal{H}_{d+1}(C) \leq \mathcal{H}_{d+1}(C_R\cap H) \leq (z(C_R)+1) O(d/k)\mathcal{H}_{d+1}(C).
    \]
    So necessarily, $z(C_R) \geq \Omega(k/d)$. We conclude that there is $w = k-O(1)$ that guarantees that $z(C^w)\leq -1$ and $\vol_{d+1}(C\setminus C^w)\leq O(d/k) \cdot  \vol_{d+1}(C)$. The conclusion follows.
\end{proof}


\subsection{Main result for the case $n=1$}
We can now present our main result for $n=1$. 
\begin{theorem}\label{thm:Main-OneDimension}
    There exists a point $\bar{\mathbf{x}}=(\bar{z},\bar{x})\in S$ such that every halfspace $H$ containing $\bar{\mathbf{x}}$, satisfies
    \[
    \vol_d(S\cap H) \geq \left( \frac{1}{e} - O\left(\frac{d}{k}\right)\right) \vol_d (S).
    \]
    In particular, there exists a universal constant $\alpha>0$ such that if $k\geq \alpha d$, then
    \[
    \mathcal{F}(S) \geq \frac{1}{2}\left(\frac{d}{d+1}\right)^{d}.
    \]
\end{theorem}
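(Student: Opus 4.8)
The plan is to assemble the three technical lemmas of this section into the claimed bound on $\mathcal{F}(S)$. I would start from the convex body $C' \subseteq C$ produced by \Cref{lem:smaller_set_integral_centroid}, whose centroid $\bar{\mathbf{x}} = (\bar z, \bar x)$ lies in $\Z \times \R^d$ and which satisfies $\vol_{d+1}(C \setminus C') \leq O(d/k)\,\vol_{d+1}(C)$. Since $\bar z \in \Z$, the point $\bar{\mathbf{x}}$ lies in the fiber $S_{\bar z}$, hence in $S$; this is the point that will witness the Oertel radius. Now fix an arbitrary halfspace $H$ with $\bar{\mathbf{x}} \in H$. Applying Gr\"unbaum's inequality \eqref{eq:GrunbaumInequality} to the convex body $C'$ and its centroid $\bar{\mathbf{x}}$, we get $\vol_{d+1}(H \cap C') \geq \frac{1}{e}\,\vol_{d+1}(C')$.

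Next I would convert this lower bound on $\vol_{d+1}(H \cap C')$ into one on $\vol_d(H \cap S)$. The chain of inequalities runs: $\vol_d(H \cap S) \geq \vol_d(H \cap S')$ (trivially, if we argue with $S' = C' \cap (\Z \times \R^d) \subseteq S$ — or more simply, I would apply \Cref{lem:upper_and_lower_bound_n_1} directly, noting that its proof only used the extremal points and the cone sandwiching, which apply verbatim to any slab-aligned convex body). The cleanest route: by \Cref{lem:upper_and_lower_bound_n_1} applied to $H \cap C$ (a slab-aligned convex body, possibly with fewer fibers but the cone constructions still go through, and any lost boundary fibers cost only $O(d/k)$ by \Cref{lem:single_face_bound}), we have $\vol_d(H \cap S) \geq (1 - O(d/k))\,\vol_{d+1}(H \cap C) \geq (1 - O(d/k))\,\vol_{d+1}(H \cap C')$. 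Chaining with Gr\"unbaum and the lemmas gives
\[
\vol_d(H \cap S) \geq \left(1 - O\!\left(\tfrac{d}{k}\right)\right)\frac{1}{e}\,\vol_{d+1}(C') \geq \left(1 - O\!\left(\tfrac{d}{k}\right)\right)\frac{1}{e}\,\vol_{d+1}(C) \geq \left(\frac{1}{e} - O\!\left(\tfrac{d}{k}\right)\right)\vol_d(S),
\]
where the last step uses the upper bound half of \Cref{lem:upper_and_lower_bound_n_1}, namely $\vol_{d+1}(C) \geq (1 - O(d/k))\,\vol_d(S)$. This proves the first displayed inequality of the theorem for every such $H$, hence $\mathcal{F}(S) \geq \frac{1}{e} - O(d/k)$.

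For the second statement, I would simply observe that $\frac{1}{2}\left(\frac{d}{d+1}\right)^d \leq \frac{1}{2} \cdot \frac{1}{e'} $ — more concretely $\frac{1}{2}\left(\frac{d}{d+1}\right)^d < \frac14 < \frac1e$ for all $d \geq 1$ (the sequence $(d/(d+1))^d$ is decreasing to $1/e$, so it is at most $1/2$ already at $d=1$). Hence $\frac{1}{e} - \frac{1}{2}\left(\frac{d}{d+1}\right)^d$ is bounded below by a positive absolute constant, say $\delta_0 = \frac1e - \frac14 > 0$. Choosing the universal constant $\alpha$ in the $O(d/k)$ term so that $k \geq \alpha d$ forces the error $O(d/k)$ below $\delta_0$, we obtain $\mathcal{F}(S) \geq \frac{1}{e} - \delta_0 \geq \frac{1}{2}\left(\frac{d}{d+1}\right)^d$, which is exactly the conjectured bound for $n=1$ since $\frac{1}{2^n}\frac1e \leq \frac{1}{2}\left(\frac{d}{d+1}\right)^d$ would need care — actually the target $\frac{1}{2}(d/(d+1))^d$ is weaker than $\frac{1}{2e}$, so this is fine and in fact recovers Conjecture~\ref{conj:Oertel}.

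The main obstacle I anticipate is bookkeeping the $O(d/k)$ errors carefully and, in particular, justifying that \Cref{lem:upper_and_lower_bound_n_1} can be applied to $H \cap C$ rather than $C$ itself: the halfspace $H$ is arbitrary and $H \cap C$ need not have integer points in all the fibers $0, \dots, k$, nor need its projection onto $\Z$ be a full interval, so the cone-sandwiching argument must be re-examined on the truncated body. The safe fix is to run the estimates on $C' \cap H$ and relate $\vol_d((C'\cap H)\cap (\Z\times\R^d))$ to $\vol_d(H\cap S)$ through the inclusion $C' \subseteq C$ together with \Cref{lem:single_face_bound} to absorb the at most two ``boundary'' fibers where the interval structure breaks; all such corrections are of order $O(d/k)$ and are swallowed by the final constant. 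Getting the quantifier order right — a single $\bar{\mathbf{x}}$ working for all $H$ — is automatic here because $\bar{\mathbf{x}}$ is the centroid of the fixed body $C'$ and does not depend on $H$.
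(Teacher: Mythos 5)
Your overall architecture matches the paper's proof exactly: take $C'$ from \Cref{lem:smaller_set_integral_centroid}, let $\bar{\mathbf{x}}$ be its centroid, apply Gr\"unbaum to $C'$, and translate between $\vol_{d+1}$ and $\vol_d$ via \Cref{lem:upper_and_lower_bound_n_1}. The derivation of the second statement from the first is also correct and identical to the paper's.

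However, the step you yourself flag as ``the main obstacle'' is a genuine gap, and your proposed fix does not close it. The problem with applying the lower bound of \Cref{lem:upper_and_lower_bound_n_1} to $H\cap C$ is \emph{not} primarily the loss of boundary fibers (which \Cref{lem:single_face_bound} would indeed absorb); it is that the error term in that lemma is really $O(d/\ell)$ where $\ell$ is the length of the body along the first coordinate, because the cone-sandwiching argument needs apexes at the two extremes so that all cone heights are at least $\ell/2$. The halfspace $H$ only has to contain $\bar{\mathbf{x}}$, whose first coordinate can sit as close as $\Theta(k/d)$ to one end of $[0,k]$ (think of $C$ a cone along the first axis, whose centroid lies at $\frac{d+1}{d+2}k$). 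Taking $H$ to be the short side then gives $\ell = \Theta(k/d)$, and your chain produces an error of order $d/\ell = O(d^2/k)$, which only yields the threshold $k\geq \alpha d^2$, not the claimed $k\geq\alpha d$. The paper handles this with a case distinction you are missing: if $C\cap H$ has length at least $k/2$, apply the lower bound to it directly as you do; otherwise $C\setminus H$ has length at least $k/2$, and one instead writes $\vol_d(S\cap H)=\vol_d(S)-\vol_d(S\setminus H)$, bounds $\vol_d(S\setminus H)$ from above by $(1+O(d/k))\vol_{d+1}(C\setminus H)$ via the \emph{upper} bound of \Cref{lem:upper_and_lower_bound_n_1}, and then uses Gr\"unbaum a second time (in the form $\vol_{d+1}(C)\leq e\,\vol_{d+1}(C\cap H)$) to convert the residual $O(d/k)\vol_{d+1}(C)$ term into $O(d/k)\vol_{d+1}(C\cap H)$. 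Without this complementary argument the stated $O(d/k)$ rate is not established.
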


\begin{proof}
    Let $C'\subseteq C$ to be the set guaranteed by \Cref{lem:smaller_set_integral_centroid}, and take $\bar{\mathbf{x}}$ to be the centroid of $C'$. Let $H$ be a halfspace that contains $\bar{\mathbf{x}}$. We have that 
    \begin{equation}\label{eq:InProof-GrunbaumToC'}
        \vol_{d+1}(C\cap H)\geq \vol_{d+1}(C'\cap H) \geq \frac{1}{e}\vol_{d+1}(C')\geq \left(\frac{1}{e}-O(d/k)
    \right) \cdot \vol_{d+1}(C).
    \end{equation}
    Using the upper bound from \Cref{lem:upper_and_lower_bound_n_1}, we get that
    \begin{align}
        \vol_{d+1}(C\cap H) \geq \left(\frac{1}{e} - O(d/k) \right) \cdot \vol_{d}(S).\label{eq:almost_conclusion_thm_n1}
    \end{align}

    If the length of $C\cap H$ along the first dimension exceeds $k/2$, we can apply the lower bound from \Cref{lem:upper_and_lower_bound_n_1} on $C\cap H$ and $S\cap H$ (instead of $C$ and $S$), i.e.,
    \begin{align*}
    \vol_{d}(S\cap H) \geq \left(1-O(d/k) \right) \cdot \vol_{d+1}(C\cap H), 
    \end{align*}
    and conclude replacing this in \Cref{eq:almost_conclusion_thm_n1}. 
    Otherwise, the length of $C\setminus H$ along the first dimension exceeds $k/2$, and therefore, we can apply the upper bound from \Cref{lem:upper_and_lower_bound_n_1} on $C\setminus H$ and $S\setminus H$, i.e.,
    \[
    \vol_d(S\setminus H) \leq \left(1+O(d/k)\right) \vol_{d+1}(C\setminus H).
    \]
    Then, together with the original lower bound on $C$ and $S$ from \Cref{lem:upper_and_lower_bound_n_1} and using \eqref{eq:InProof-GrunbaumToC'} to write $\vol_{d+1}(C) \leq e(1+O(d/k))\vol_{d+1}(C\cap H)$, we get that 
    \begin{align*}
    \vol_d(S\cap H) &= \vol_d(S) -  \vol_d(S\setminus H)\\
    &\geq (1-O(d/k))\cdot \vol_{d+1}(C) - (1+O(d/k))\cdot \vol_{d+1}(C\setminus H)\\
    &= (1-O(d/k))\cdot \vol_{d+1}(C\cap H) - O(d/k)\vol_{d+1}(C\setminus H)\\
    &\geq (1-O(d/k))\cdot \vol_{d+1}(C\cap H) - O(d/k)\vol_{d+1}(C)\\
    &\geq (1-O(d/k))\cdot \vol_{d+1}(C\cap H) - eO(d/k)(1+O(d/k))\vol_{d+1}(C\cap H)\\
    &= (1-O(d/k))\cdot \vol_{d+1}(C\cap H). 
    \end{align*}
    So we can conclude by replacing this in \Cref{eq:almost_conclusion_thm_n1}.
\end{proof}

\section{The general case}\label{sec:general}
Throughout this section, we consider the general case $n\geq 1$, that is, $C\subseteq \R^{n+d}$ and $S=C\cap (\Z^n\times \R^d)$.  We first need to define what ``large set'' means in this context. For $n=1$, we said the set $C$ was large if the length of the segment $\proj_{\R}(C)$ was large. Analogously, we will consider $C$ to be large in $\R^{n+d}$ if its projection $\proj_{\R^n}(C)$ contains a ball of large radius. Assuming without loss of generality that $0\in C$, and denoting by $\mathbb{B}_n$ the Euclidean unit ball in $\R^n$, let $k$ be such that
\[
k\mathbb{B}_n \subset \proj_{\R^n}(C).
\]
Our goal is to prove that there exists a universal constant $\alpha>0$ such that
\begin{equation}
k\geq \alpha d^2n^{3/2} \implies \mathcal{F}(S) \geq \frac{1}{2^n}\left(\frac{d}{d+1}\right)^d.
\end{equation}

 Having already proved our bound for the case $n=1$, a naive strategy would be to apply the same ideas inductively over the dimensions of $\R^n$. However, such an approach would yield a bound that grows exponentially on $n$, which we want to avoid. Thus, we depart from the idea of conic approximations but maintain the following key observation: ``away from the boundary'' of $C$, any point $z\in \proj_{\R^n}(C)$ satisfies
\begin{align}
\vol_d(C\cap (\{z\}\times \R^d)) \approx \vol_{n+d}(C\cap (\{w\ :\ \|z-w\|_{\infty}\leq 1/2\}\times \R^{d}))
\label{eq:approx_volume_slice}
\end{align}

In what follows, we will consider the following definitions, illustrated in \Cref{fig:Boxes}:
\begin{itemize}
    \item For $z\in\R^n$, we denote the \textbf{slice} of $C$ induced by $z$ as $S_{z}(C) = C\cap (\{z\}\times \R^d)$ (solid blue line inside the set, see \Cref{fig:Boxes}).
    \item For $z\in\R^n$, we denote the $n$-dimensional \textbf{box} of $z$ as $\mathrm{Box}_z^n = \{w\in \R^n\, :\, \|w-z\|_{\infty}\leq 1/2\}$ (solid red square over the grid, see \Cref{fig:Boxes}).
    \item We denote the \textbf{rectangular cut} of $C$ induced by $z$ as $B_z(C) = C\cap (\mathrm{Box}_{z}^n\times\R^d)$ (volumetric section of the convex body in blue, see \Cref{fig:Boxes}).
\end{itemize}

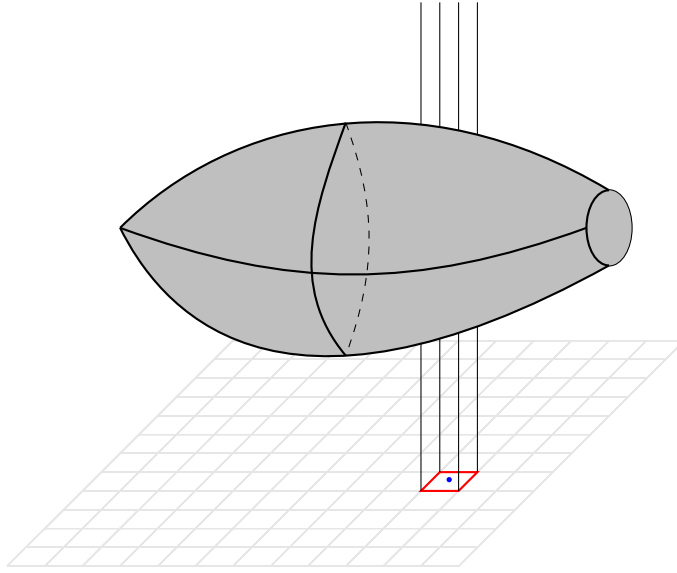
\begin{figure}[t]
    \centering
    \begin{tikzpicture}[scale = 0.8]
		
		\foreach \x in {3,3.5,...,9}
		\foreach \y in {-4.5,-4.25,...,-1.5}
		{
			\draw[very thin,gray!20!white] (\x-4.5,-4.5) -- (\x-1.5,-1.5);
			\draw[very thin,gray!20!white] (3+\y,\y) -- (9+\y,\y);
		}

        \draw[red,thick] (7.5-3.5, -3.5) -- (8-3.5, -3.5) -- (8-3.25, -3.25) -- (7.5-3.25, -3.25) -- cycle;

        \fill[blue] (7.725-3.35,-3.35) circle (1pt);

        \draw[blue, very thick] (7.725-3.35,-1.25) -- (7.725-3.35,1);
        \fill[blue] (7.725-3.35,-1.25) circle (1pt);
        \fill[blue] (7.725-3.35,1) circle (1pt);
        
		\draw (7.5-3.5, -3.5)-- (7.5-3.5,-1.2);
        \draw[blue,dashed] (7.5-3.5,-1.2) -- (7.5-3.5,0.75);
        \draw (7.5-3.5,0.75) -- (7.5-3.5, 2.2);
        
        \draw (8-3.5, -3.5) -- (8-3.5,-1);
        \draw[blue, dashed] (8-3.5,-1) -- (8-3.5,0.6);
        \draw (8-3.5,0.6) --(8-3.5, 2.2);
        
        \draw (7.5-3.25, -3.25) -- (7.5-3.25,-1.4);
        \draw[blue, dashed] (7.5-3.25,-1.4)-- (7.5-3.25,1.2);
        \draw (7.5-3.25,1.2)--(7.5-3.25, 2.2);
        
        \draw (8-3.25, -3.25) -- (8-3.25,-1.2);
        \draw[blue, dashed] (8-3.25,-1.2) -- (8-3.25,1.1);
        \draw (8-3.25,1.1)--(8-3.25, 2.2);


        
        \filldraw[blue,fill opacity=0.2] (7.5-3.5,-1.2) to[out=270, in=160] (7.5-3.25,-1.4) -- (8-3.25,-1.2) to[out=170, in=280]  (8-3.5,-1) -- (7.5-3.5,-1.2);

        \draw[blue] (7.5-3.5,0.75) to[out=80, in=230] (7.5-3.25,1.2) -- (8-3.25,1.1) to[out=240, in=80]  (8-3.5,0.6) -- (7.5-3.5,0.75);

        \fill[blue, opacity=0.2] (7.5-3.5,0.75) to[out=80, in=230] (7.5-3.25,1.2) -- (8-3.25,1.1) 
        -- (8-3.25,-1.2) to[out=170, in=280]  (8-3.5,-1) -- (7.5-3.5,-1.2);
        
    \fill[name path=convexo, fill=gray!50!white, fill opacity=0.1] (0,0) 
        .. controls (1.5,1.5) and (4,2) .. (6.5,0.5)  
        -- (6.5,-0.5) 
        .. controls (2,-3) and (0.5,-1) .. (0,0); 

    \draw[thick] (0,0) .. controls (1.5,1.5) and (4,2) .. (6.5,0.5);
    \draw[thick] (6.5,-0.5) .. controls (2,-3) and (0.5,-1) .. (0,0);
    \draw[thick] (6.5,0.5) arc[start angle=90, end angle=450, x radius=0.3, y radius=0.5];
    \fill[fill=gray!50!white, opacity=0.1](6.5,-0.5) arc[start angle=270, end angle=450, x radius=0.3, y radius=0.5];

    \draw[thick] (0,0) to[out=340, in=200] (6.2,0);
    \draw[thick] (3,1.4) to[out=250, in=130] (3,-1.7);
    \draw[dashed] (3,1.4) to[out=290, in=70] (3,-1.7);
\end{tikzpicture}
    \caption{Illustration of slices, boxes and rectangular cut with $n=2$ and $d=1$.}
    \label{fig:Boxes}
\end{figure}
With this notation, \Cref{eq:approx_volume_slice} can be written as 
\[
\vol_d(S_z(C))\approx \vol_{n+d}(B_z(C)).
\] 
Our strategy can be split in the following steps:

\begin{enumerate}

        \item \textbf{Approximate volume with slices.}
    The volume of a convex body that contains a large ball in the projection is well approximated by the measure of its slices. More precisely, 
    if $\proj_{\R^n}(C)$ contains a ball of radius $\Omega(k)$, then $\vol_{n+d}(C) = \vol_d(C\cap (\Z^n\times \R^d))\cdot (1\pm O(d n^{3/4}/k^{1/2}))$ (see \Cref{lem:approx_mu_with_nu}). We prove this in two steps.
    
    \begin{enumerate}
        \item Take the set $C_{-\varepsilon}= (1-\varepsilon)\cdot  C$. We have that $\vol_{n+d}(C_{-\varepsilon})=(1-\varepsilon)^{n+d} \vol_{n+d}(C)$, and for every point $z\in \proj_{\R^n}(C_{-\varepsilon})$ the ball of radius $\varepsilon r$ centered at $z$ is contained in $\proj_{\R^n}(C)$ (a simple consequence of \Cref{lem:thales_corollary}).
        
        \item If a point in the projection has a large ball around, then the volume of the corresponding $d$-dimensional slice approximates well the volume of the $(n+d)$-dimensional rectangular cut. More precisely, for a point $z\in \proj_{\R^n}(C)$, if a ball of radius $r$ centered at $z$ is contained in $\proj_{\R^n}(C)$, then $\vol_{n+d}(B_z(D)) = \vol_d(S_z(D)) \cdot (1\pm O(d\sqrt{n}/r))$. See \Cref{lem:slice_with_ball_around}.
    \end{enumerate}
    
    \item \textbf{Move the centroid.} We can assume the centroid of $C$ is in $\Z^n\times \R^d$ after losing only a small fraction of the volume. More precisely, we can shift $C$ and obtain a new set $C'$ with centroid in $\Z^n\times \R^d$, so that $\vol_{n+d}(C\triangle C') \leq  \vol_{n+d}(C)\cdot O(\sqrt{n}(n+d)/k)$.  See \Cref{lem:shift_centroid}.
    
    \item \textbf{Cut and use the continuous bound.} If we cut $C'$ with an arbitrary hyperplane passing through its centroid and denote the two sides by $A'$ and $B'$, then both $\vol_{n+d}(A')$ and $\vol_{n+d}(B')$ are at least $\vol_{n+d}(C')/e$.
    
    \item \textbf{Approximate volumes of sides by the slices.} Either $\proj_{\R^n}(A')$ or $\proj_{\R^n}(B')$ must contain a ball of radius $k/2$. In general, if the union of two convex sets contains a ball of radius $k$, one of them contains a ball of radius $k/2$~\cite{Kadets2005Covering}. Assume without loss of generality that $A'$ contains the ball. The volumes of $C'$ and $A'$ are well approximated by the slices, so the volume of $B'$ is also well approximated by the slices.
\end{enumerate}

\subsection{Technical Lemmas for the general case}

In this section, we write precise formulas in the statements so that at the end we obtain an explicit bound on $k$. Since this makes the computations heavier, we also include, before each proof, a high-level \emph{sketch of proof} using asymptotic notation. 

For a convex set $D$, we have that for any pair of elements $z,w\in D$,  $u = \frac{1}{1+\varepsilon} z + \frac{\varepsilon}{1+\varepsilon} w \in D$, and then $z+\varepsilon w = (1+\varepsilon)u \in (1+\varepsilon) D$, so we can write the following lemma (see \Cref{fig:thales}).
\begin{lemma}
    \label{lem:thales_corollary}
    Let $D\subseteq \R^n$ be a convex set. For all $z,w \in D$, and $\varepsilon>0$, we have that $z+\varepsilon w \in (1+\varepsilon)\cdot D$.
\end{lemma}

\begin{figure}
    \centering
    \begin{tikzpicture}
\filldraw[thick, fill = gray!20!white, fill opacity = 0.5] (0,0) ellipse (4cm and 2cm);
\draw[dashed, opacity = 0.5] (0,0) ellipse (4.8cm and 2.4cm);

\fill[blue] (0,0) circle (1pt) node[below left]{$0$};
\fill[blue] (2,1.5) circle (1pt) node[left, xshift=-0.2cm]{$z$};
\fill[blue] (3.9,0.3) circle (1pt) node[below right]{$w$};
\fill[blue] (2.78,1.56) circle (1pt) node[right]{$z+\varepsilon w$};
\fill[blue] (2.32,1.3) circle (1pt)
node[below]{$u$};
\draw[blue] (0,0) -- (2.78,1.56);
\draw[blue] (2,1.5) -- (3.9,0.3);
\draw[blue] (2,1.5) -- (0,0);
\draw[blue] (2.78,1.56) -- (3.9,0.3);
\draw[blue] (0,0) -- (3.9,0.3);
\draw[blue] (2,1.5) -- (2.78,1.56);

\node at (-3,1) {$D$};
\node at (-4,2) {$(1+\varepsilon)D$};
\end{tikzpicture}
    \caption{A graphic representation of \Cref{lem:thales_corollary}}
    \label{fig:thales}
\end{figure}
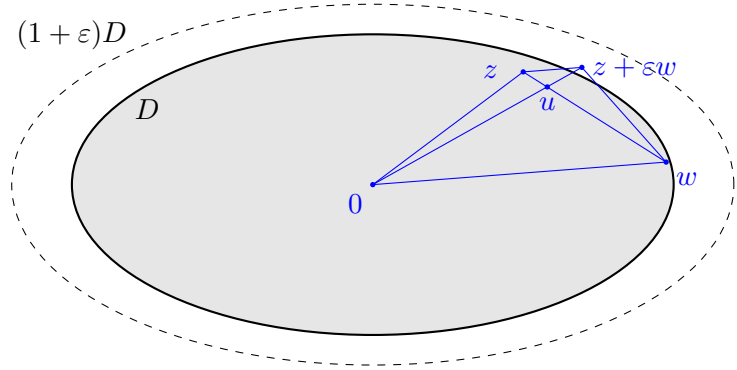

\begin{lemma}
    \label{lem:slice_with_ball_around}
    For a convex body $C\subseteq \R^{n+d}$ and a point $z\in \proj_{\R^n}(C)$ such that $\proj_{\R^n}(C)$ contains a ball of radius $r>\sqrt{n}/2$ around $z$, we have that
    \[
    \vol_d \left(S_z(C)\right) \cdot \left(1-\frac{\sqrt{n}}{2r} \right)^d\leq \vol_{n+d} (B_z(C)) \leq \vol_d (S_z(C))\cdot \left(1+\frac{\sqrt{n}}{2r} \right)^d.
    \]
\end{lemma}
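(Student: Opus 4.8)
The plan is to fix the point $z$ and compare the $d$-dimensional slice $S_z(C)$ with the $(n+d)$-dimensional rectangular cut $B_z(C) = C\cap(\mathrm{Box}_z^n\times\R^d)$. The idea is that every fiber $S_w(C)$ for $w\in\mathrm{Box}_z^n$ is sandwiched between a shrunk and an enlarged copy of $S_z(C)$, by convexity. Concretely, since $\mathrm{Box}_z^n$ has $\ell_\infty$-radius $1/2$, any $w\in\mathrm{Box}_z^n$ satisfies $\|w-z\|_2\le\sqrt n/2$, so $w' := z - (w-z)\cdot\frac{\sqrt n/2}{\|w-z\|_2}$ (extended to the full radius) lies in the ball of radius $r$ around $z$, hence $w'$ (together with its slice) lies in $\proj_{\R^n}(C)$. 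Then $w$ is a convex combination $w = (1-t)z + t w''$ for an appropriate point $w''$ in the radius-$r$ ball, with $t\le \frac{\sqrt n/2}{r}$; by convexity of $C$, $S_w(C) \supseteq (1-t)S_z(C) + t\{\text{something}\}$, which contains a translate of $(1-t)S_z(C)$, giving $\vol_d(S_w(C)) \ge (1-\frac{\sqrt n}{2r})^d\vol_d(S_z(C))$. For the upper bound, one runs the symmetric argument viewing $S_z(C)$ itself as a convex combination of $S_w(C)$ and a slice near the far side of the ball, or equivalently dilates: $S_z(C)$ lies inside a scaled copy of $S_w(C)$ about a point of the radius-$r$ ball, yielding $\vol_d(S_w(C)) \le (1+\frac{\sqrt n}{2r})^d\vol_d(S_z(C))$.

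Next I would integrate these fiberwise bounds over $w\in\mathrm{Box}_z^n$. By Fubini for the Hausdorff/Lebesgue measure,
\[
\vol_{n+d}(B_z(C)) = \int_{\mathrm{Box}_z^n}\vol_d(S_w(C))\,d\vol_n(w),
\]
and since $\vol_n(\mathrm{Box}_z^n)=1$, the lower bound $\vol_d(S_w(C))\ge (1-\frac{\sqrt n}{2r})^d\vol_d(S_z(C))$ integrates to $\vol_{n+d}(B_z(C))\ge (1-\frac{\sqrt n}{2r})^d\vol_d(S_z(C))$, and the upper bound integrates to the matching inequality on the other side. The hypothesis $r>\sqrt n/2$ guarantees $1-\frac{\sqrt n}{2r}>0$, so the lower bound is not vacuous.

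The step I expect to require the most care is making the convexity/dilation argument for each fiber fully rigorous, in particular pinning down the correct scaling point and verifying that the enlarged and shrunk fibers are genuine subsets/supersets of $S_w(C)$ — one must use that not just $z$ but a whole ball of radius $r$ around $z$ projects into $C$, so that there is room to realize $w$ as an interior convex combination and to realize $z$ as lying strictly inside a dilate of the $w$-fiber. The Hausdorff-measure bookkeeping (that $\vol_d$ of an affine image of a $d$-set scales by the Jacobian, and that $\vol_{n+d}$ slices as a product measure over the $\R^n\times\R^d$ splitting) is standard given the preliminaries, so I would state it briefly and cite the Fubini property of Hausdorff measure rather than belabor it.
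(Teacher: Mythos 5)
Your plan matches the paper's proof: the same Fubini decomposition of $\vol_{n+d}(B_z(C))$ over the fibers $S_w(C)$, $w\in\mathrm{Box}_z^n$, and the same fiberwise comparison, realizing a shrunk translate of $S_z(C)$ inside $S_w(C)$ via a cone to a point of $C$ projecting to distance $r$ from $z$ in the direction of $w$ (scaling factor $(r-\sqrt{n}/2)/r$), and symmetrically bounding $\vol_d(S_w(C))$ from above using a point on the opposite side (factor $r/(r+\sqrt{n}/2)$). The argument is correct as outlined.
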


\begin{proof}
    Since $B(z,r)\subset\proj_{\R^n}(C)$, we have that the box $\textup{Box}^n_z = \{ w\in \R^n: \|w-z\|_\infty \leq 1/2\}\subset \proj_{\R^n}(C)$. Then, we have the following formula:
    \[
    \vol_{n+d} (B_z(C)) = \int_{w\in \textup{Box}_z^n} \vol_d (S_w(C)) \, dw,
    \]
    where $dw$ is the Lebesgue measure in $\R^n$. Because of this formula, and since  $\vol_n (\textup{Box}^n_z)=1$, it is enough to bound $\vol_d (S_w(C))$ in terms of $\vol_d (S_z(C))$.

    Take a given $w\in \textup{Box}_z^n$. We have that $w$ is at a distance at most $\sqrt{n}/2$ of $z$. Consider a point $\mathbf{y}\in C$ such that $\proj_{\R^n}(\mathbf{y})-z$ is in the ray $\R_+(w-z) := \{ t(w-z)\mid t\geq 0 \}$, and $\|\proj_{\R^n}(\mathbf{y})-z\|_2 = r$ ($\mathbf{y}$ is guaranteed to exist due to the inclusion $B(z,r)\subset \proj_{\R^n}(C)$). By convexity, the cone  $K_z=\cone(\mathbf{y},S_z(C))$ is contained in $C$. Therefore, since $\|w-z\|_2\leq r$, the intersection of $K_z$ and the affine subspace $\{w\}\times \R^d$ is contained in $S_w(C)$ (see \Cref{fig:slices}). 
    
    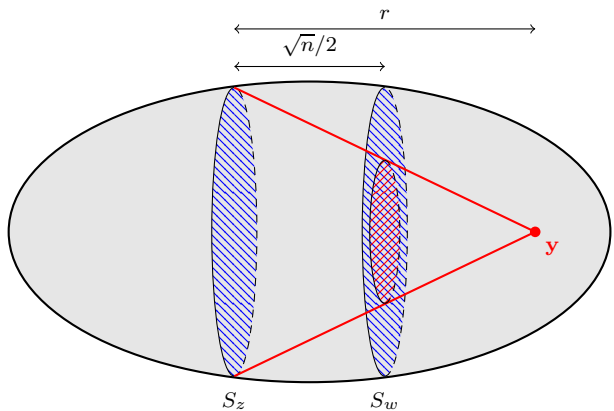
\begin{figure}[ht]
    \centering
    \begin{tikzpicture}

\filldraw[thick, fill = gray!20!white, fill opacity = 0.5] (0,0) ellipse (4cm and 2cm);

\draw[pattern=north west lines, pattern color = blue, fill opacity = 0.3] (-1,1.92) arc[start angle=90, end angle=270, x radius=0.3, y radius=1.92];
\draw[dashed,pattern=north west lines, pattern color = blue, fill opacity = 0.3] (-1,-1.92) arc[start angle=270, end angle=450, x radius=0.3, y radius=1.92];

\draw[pattern=north west lines, pattern color = blue, fill opacity = 0.3] (1,1.92) arc[start angle=90, end angle=270, x radius=0.3, y radius=1.92];
\draw[dashed,pattern=north west lines, pattern color = blue, fill opacity = 0.3] (1,-1.92) arc[start angle=270, end angle=450, x radius=0.3, y radius=1.92];

\fill[red] (3,0) circle (2pt) node[below right]{\scriptsize $\mathbf{y}$};
\draw[red,thick] (3,0) -- (-1,1.92);
\draw[red,thick] (3,0) -- (-1,-1.92);
\draw[pattern=north east lines, pattern color = red, fill opacity = 0.3] (1,0.95) arc[start angle=90, end angle=270, x radius=0.2, y radius=0.95];
\draw[dashed,pattern=north east lines, pattern color = red, fill opacity = 0.3] (1,-0.95) arc[start angle=270, end angle=450, x radius=0.2, y radius=0.95];


\draw[<->] (-1,2.2)--(1,2.2) node[above,midway]{\scriptsize $\sqrt{n}/2$};

\draw[<->] (-1,2.7)--(3,2.7) node[above,midway]{\scriptsize $r$};

\node[below] at (-1,-2){\scriptsize $S_z$};
\node[below] at (1,-2){\scriptsize $S_w$};

\end{tikzpicture}
    \caption{A graphic representation of the procedure used in \Cref{lem:slice_with_ball_around} to compare the volume of two slices.}
    \label{fig:slices}
\end{figure}
    
    The cone $K_z$ has height $r$, and $S_w$ is at distance at most $\sqrt{n}/2$ from the base $S_z(C)$. So, its intersection with the affine subspace defined by $w$ is a rescaled translate of $S_z(C)$, with a rescaling factor no smaller than $(r-\sqrt{n}/2)/r$. Therefore,
    
    \begin{align*}
    \vol_d (S_z(C))\cdot \left(\frac{r-\sqrt{n}/2}{r}\right)^d 
    &\leq \vol_d (S_w(C)).
    \end{align*}
    
    Similarly, there must exist a point $\mathbf{y}'\in C$ such that $\proj_{\R^n}(\mathbf{y}')-z$ points in the exact opposite direction, and $\|\proj_n(\mathbf{y}')-z\|_2 =r$. The cone $K' = \cone(\mathbf{y}',S_{w}(C))$ is contained in $C$ by convexity, so its intersection with the affine subspace $\{z\}\times\R^d$ is contained in $S_z$. Therefore, reasoning as above,
    \[
    \vol_d (S_w(C))\cdot \left(\frac{r}{r+\sqrt{n}/2}\right)^{d} \leq \vol_d (S_z(C)).
    \]
    Putting these bounds back in the integral formula and noting that $\int_{\textup{Box}_z^n}1dw = 1$, we conclude the proof of the lemma.
\end{proof}

\begin{lemma}
    \label{lem:approx_mu_with_nu}
    Let $C\subseteq \R^{n+d}$ be a convex body such that $\proj_{\R^n}(C)$ contains a ball of radius $k$, and let $S = C\cap(\Z^n\times\R^d)$. If $5\frac{d n^{3/4}}{k^{1/2}}\leq 1$, we have that
    \[
    \vol_{n+d} (C)\cdot \left(1 - 5\frac{d n^{3/4}}{k^{1/2}}\right) \leq  \vol_d (S) \leq \vol_{n+d} (C) \cdot \left(1 + 5\frac{d n^{3/4}}{k^{1/2}}\right).
    \]
\end{lemma}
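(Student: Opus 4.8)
The plan is to sandwich $\vol_d(S)$ between the $(n+d)$-volumes of two rescaled copies of $C$ about the origin (which by the standing assumption lies in $C$): the shrunk body $C_{-\varepsilon}:=(1-\varepsilon)C\subseteq C$ and the dilated body $C^+_\varepsilon:=\tfrac{1}{1-\varepsilon}C\supseteq C$, where $\varepsilon\in(0,1)$ is a parameter fixed at the end. These dilations multiply the $(n+d)$-volume by $(1-\varepsilon)^{n+d}$ and by $(1-\varepsilon)^{-(n+d)}$. Since $k\mathbb{B}_n\subseteq\proj_{\R^n}(C)$, and likewise $\proj_{\R^n}(C^+_\varepsilon)\supseteq k\mathbb{B}_n$ while $\proj_{\R^n}(C)=(1-\varepsilon)\proj_{\R^n}(C^+_\varepsilon)$, \Cref{lem:thales_corollary} yields two symmetric geometric facts: \emph{(a)} every $z\in\proj_{\R^n}(C_{-\varepsilon})$ has a ball of radius $\varepsilon k$ around it contained in $\proj_{\R^n}(C)$; \emph{(b)} every $z\in\proj_{\R^n}(C)$ has a ball of radius $\varepsilon k$ around it contained in $\proj_{\R^n}(C^+_\varepsilon)$. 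Everything then reduces to combining \Cref{lem:slice_with_ball_around}, applied with $r=\varepsilon k$ at lattice points, with the fact that the boxes $\{\mathrm{Box}^n_z:z\in\Z^n\}$ tile $\R^n$, so that $\sum_{z\in\Z^n}\vol_{n+d}(B_z(D))=\vol_{n+d}(D)$ for every convex body $D$.

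For the upper bound I would sum over \emph{all} lattice points of $\proj_{\R^n}(C)$. For each such $z$, fact (b) makes \Cref{lem:slice_with_ball_around} applicable to $C^+_\varepsilon$ with $r=\varepsilon k$, and since $S_z(C)\subseteq S_z(C^+_\varepsilon)$ this gives $\vol_d(S_z(C))\le\big(1-\tfrac{\sqrt n}{2\varepsilon k}\big)^{-d}\vol_{n+d}(B_z(C^+_\varepsilon))$; summing and using that the boxes tile,
\[
\vol_d(S)\ \le\ \frac{1}{\big(1-\tfrac{\sqrt n}{2\varepsilon k}\big)^{d}}\sum_{z\in\Z^n}\vol_{n+d}(B_z(C^+_\varepsilon))\ =\ \frac{\vol_{n+d}(C^+_\varepsilon)}{\big(1-\tfrac{\sqrt n}{2\varepsilon k}\big)^{d}}\ =\ \frac{\vol_{n+d}(C)}{\big(1-\tfrac{\sqrt n}{2\varepsilon k}\big)^{d}\,(1-\varepsilon)^{n+d}} .
\]
Note this direction needs no estimate on slices near $\partial\proj_{\R^n}(C)$ — passing to the dilated body removes the boundary difficulty entirely.

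For the lower bound I would instead restrict the sum to the lattice points of the shrunk projection $\proj_{\R^n}(C_{-\varepsilon})$. For $z\in\Z^n\cap\proj_{\R^n}(C_{-\varepsilon})$, fact (a) gives a ball of radius $\varepsilon k$ around $z$ inside $\proj_{\R^n}(C)$, so \Cref{lem:slice_with_ball_around} applies directly to $C$ and yields $\vol_d(S_z(C))\ge\big(1+\tfrac{\sqrt n}{2\varepsilon k}\big)^{-d}\vol_{n+d}(B_z(C))$. The only nontrivial point is a covering claim: $\bigcup_{z\in\Z^n\cap\proj_{\R^n}(C_{-\varepsilon})}\mathrm{Box}^n_z\supseteq\proj_{\R^n}(C_{-\varepsilon'})$ for $\varepsilon'=\varepsilon+\tfrac{\sqrt n}{2k}$. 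Indeed, if $p\in\proj_{\R^n}(C_{-\varepsilon'})$ and $z$ is a nearest lattice point, then $p\in\mathrm{Box}^n_z$ and $\|z-p\|_2\le\tfrac{\sqrt n}{2}=(\varepsilon'-\varepsilon)k$, and — exactly as in fact (a), via \Cref{lem:thales_corollary} — one checks that $z$ is $(1-\varepsilon)$ times a convex combination of a point of $\proj_{\R^n}(C)$ and a point of $k\mathbb{B}_n\subseteq\proj_{\R^n}(C)$, so $z\in(1-\varepsilon)\proj_{\R^n}(C)=\proj_{\R^n}(C_{-\varepsilon})$. Granting this and summing the previous inequality over these $z$ (the boxes have disjoint interiors, so the union of the sets $C\cap(\mathrm{Box}^n_z\times\R^d)$ contains $C_{-\varepsilon'}$),
\[
\vol_d(S)\ \ge\ \Big(1+\tfrac{\sqrt n}{2\varepsilon k}\Big)^{-d}\!\!\sum_{z\in\Z^n\cap\proj_{\R^n}(C_{-\varepsilon})}\vol_{n+d}(B_z(C))\ \ge\ \frac{(1-\varepsilon')^{n+d}}{\big(1+\tfrac{\sqrt n}{2\varepsilon k}\big)^{d}}\,\vol_{n+d}(C) .
\]

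Finally I would pick $\varepsilon$ and verify both displays lie within $1\pm 5\,d n^{3/4}/k^{1/2}$. Using $(1-x)^m\ge 1-mx$ and $(1+x)^{-m}\ge 1-mx$, up to a universal constant factor the errors are governed by $da+(n+d)\varepsilon+(n+d)\tfrac{\sqrt n}{2k}$ with $a=\tfrac{\sqrt n}{2\varepsilon k}$; balancing $da$ against $(n+d)\varepsilon$ gives $\varepsilon=\sqrt{\tfrac{d\sqrt n}{2(n+d)k}}$, which makes $da+(n+d)\varepsilon$ of order $\sqrt{d(n+d)}\,n^{1/4}/k^{1/2}$, and since $d(n+d)\le 2d\max(n,d)$ this is $\le 2\,dn^{3/4}/k^{1/2}$ in every regime, the third term being lower order; the constant $5$ then absorbs the elementary conversions with room to spare. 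The hypothesis $5\,dn^{3/4}/k^{1/2}\le 1$ is exactly what guarantees $\varepsilon k>\tfrac{\sqrt n}{2}$ (so \Cref{lem:slice_with_ball_around} is applicable) and keeps all the quantities above small enough for those inequalities. The main obstacle is not any single step but the tension built into $\varepsilon$: shrinking or dilating costs a multiplicative $(1-\varepsilon)^{\pm(n+d)}$, while the slice-versus-box comparison of \Cref{lem:slice_with_ball_around} is sharp only when its radius $\varepsilon k$ is large — together with the bookkeeping of the covering claim that legitimizes the box sum in the lower bound.
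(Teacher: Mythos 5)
Your proposal is correct and follows essentially the same route as the paper's proof: sandwiching $\vol_d(S)$ via the dilations $(1-\varepsilon)C$ and an enlarged copy of $C$, applying \Cref{lem:thales_corollary} to get a ball of radius $\varepsilon k$ around the relevant lattice points, invoking \Cref{lem:slice_with_ball_around} with $r=\varepsilon k$, and handling the boundary boxes by a further shrink (your $\varepsilon'=\varepsilon+\tfrac{\sqrt n}{2k}$ versus the paper's $2\varepsilon$). The only difference is the choice of $\varepsilon$ — you balance the two error terms exactly, while the paper simply takes $\varepsilon=n^{-1/4}k^{-1/2}$ — and both yield the stated $1\pm 5\,dn^{3/4}/k^{1/2}$ bounds.
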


\begin{proof}
    Let us assume first that the ball of radius $k$ contained in $C$ is centered at $0$.  Let $\varepsilon=1/(n^{1/4}k^{1/2})$. Note that $\varepsilon\leq 1/5$. Define $C_{-\varepsilon}= C \cdot (1-\varepsilon)$. By \Cref{lem:thales_corollary}, for every $z\in \proj_{\R^n}(C_{-\varepsilon})$, the ball of radius $k\varepsilon(1-\varepsilon)$ centered at $z$ is contained in $(1+\varepsilon)\proj_{\R^n}(C_{-\varepsilon})\subset \proj_{\R^n}(C)$. Denote by $\textup{Int}_n(\cdot)=\Z^n\cap \proj_{\R^n}(\cdot)$. By \Cref{lem:slice_with_ball_around}, for every $z\in \textup{Int}_n(C_{-\varepsilon})$, we can approximate $\vol_d (S_z(C))$ with $\vol_{n+d}(B_z(C))$.
    Therefore, we have that
    \begin{align*}
        \vol_d (S) 
        &= \sum_{z\in \textup{Int}_n(C)} \vol_d (S_z(C))\\
        &\geq \sum_{z\in \textup{Int}_n(C_{-\varepsilon})} \vol_d (S_z(C))\\
        & \geq \left(1-\frac{\sqrt{n}}{2k\varepsilon(1-\varepsilon)} \right)^d\sum_{z\in \textup{Int}_n(C_{-\varepsilon})}  \vol_{n+d}(B_z(C))\\
        &=\left(1-\frac{\sqrt{n}}{2k\varepsilon(1-\varepsilon)} \right)^d \cdot \vol_{n+d} \left( \bigcup_{z\in \textup{Int}_n(C_{-\varepsilon})}  B_z(C) \right).
    \end{align*}
    We would like to conclude the lower bound here by replacing $\bigcup_{z\in \textup{Int}_n(C_{-\varepsilon})}  B_z(C)$ with $C_{-\varepsilon}$. However, there might be points in $C_{-\varepsilon}$ that are not covered by any box $B_z$. Let $\mathbf{y}$ be such a point. In that case, $\proj_{\R^n}(\mathbf{y})$ is at distance at most $\sqrt{n}/2$ of a point $w\in \Z^n\setminus \proj_{\R^n}(C_{-\varepsilon})$.
    We claim that $\mathbf{y}\notin C_{-2\varepsilon}$. Indeed, if this were not the case, \Cref{lem:thales_corollary} yields that the ball centered in $\proj_{\R^n}(\mathbf{y})$ with radius $k\varepsilon(1-2\varepsilon)$ is contained in $(1+\varepsilon)\proj_{\R^n}(C_{-2\varepsilon})\subset \proj_{\R^n}(C_{-\varepsilon})$. Noting that
    \[
    k\varepsilon(1-2\varepsilon) \geq \frac{3k}{5k^{1/2}n^{1/4}}\geq \frac{3k}{5k^{1/2}n^{1/4}}\cdot\frac{5dn^{3/4}}{k^{1/2}} = 3\sqrt{n} \geq \|\proj_{\R^n}(\mathbf{y})-w\|,
    \]
    we would deduce that $w\in \proj_{\R^n}(C_{-\varepsilon})$, which is a contradiction. This proves the claim. 
    We deduce that $(1-2\varepsilon)C \subset \bigcup_{z\in \textup{Int}_n(C_{-\varepsilon})}  B_z(C)$. Recall that for any $a,b\geq 0$, $(1-a)(1-b)\geq 1-a-b$ and, if $a\in [0,1]$, then $(1-a)^b\geq 1-ab$ also holds. Then we can write
    \begin{align*}
        \vol_d (S) 
        &\geq
        \left(1-\frac{\sqrt{n}}{2k\varepsilon(1-\varepsilon)} \right)^d \cdot \vol_{n+d} (C_{-2\varepsilon})\\
        & = \left(1-\frac{\sqrt{n}}{2k\varepsilon(1-\varepsilon)} \right)^d \cdot \left( 
        1-2\varepsilon\right)^{n+d} \cdot 
        \vol_{n+d} (C)\\
        & \geq \left(1-\frac{5\sqrt{n}}{8k\varepsilon} -2(n+d)\varepsilon\right) \cdot 
        \vol_{n+d} (C)\\
        &= \left( 1- \frac{5d n^{3/4}}{8k^{1/2}}- 2\frac{(n+d)}{n^{1/4}k^{1/2}} \right) \cdot \vol_{n+d} (C)\\
        &\geq \left( 1- \frac{5d n^{3/4}}{8k^{1/2}}- 2\frac{dn}{n^{1/4}k^{1/2}} \right) \cdot \vol_{n+d} (C) \geq  \left( 1- 3\frac{d n^{3/4}}{k^{1/2}}\right) \cdot \vol_{n+d}(C), 
    \end{align*}
    
     For the upper bound we define $C_{+\varepsilon}= (1+\varepsilon)\cdot C$. We have that
    \begin{align*}
        \vol_d (S) 
        &= \sum_{z\in \textup{Int}_n(C)} \vol_d (S_z(C))\\
        &\leq \sum_{z\in \textup{Int}_n(C)} \vol_d (S_z(C_{+\varepsilon})).
    \end{align*}
    The inequality holds because if the origin is contained in $C$, then $C\subseteq C_{+\varepsilon}$, and therefore, for all $z\in \Z^n$, $S_z(C)\subseteq S_z(C_{+\varepsilon})$. Now, for every $z\in \textup{Int}_n(C)$, the ball of radius $k\varepsilon$ centered at $z$ is contained in $
    C_{+\varepsilon}$, so we can apply \Cref{lem:slice_with_ball_around}.  Consider the following facts:
    \begin{enumerate}
        \item $(n+d)\varepsilon \leq nd\varepsilon = \frac{dn^{3/4}}{k^{1/2}}\leq \frac{1}{5}\leq \frac{1}{2}$.
        \item $\frac{d\sqrt{n}}{2k\varepsilon} = \frac{dn^{3/4}}{2k^{1/2}} \leq \frac{1}{10}\leq \frac{1}{2}$.
        \item For any $a,b>0$, if $a\leq 1/2$, then $\frac{1}{1-a}\leq (1+2a)$.
        \item For any $a,b>0$, if $ab\leq 1$, then $(1+a)^b\leq 1+2ab$. This is because $(1+a)^b\leq e^{ab}$, and by the convexity of $e^x$, we have that $e^x\leq 1+(e-1)x$ for all $x\in [0,1]$.
    \end{enumerate}
   Then we can write,
    \begin{align*}
        \vol_d (S) 
        &\leq \frac{1}{\left(1-\frac{\sqrt{n}}{2k\varepsilon}\right)^d} 
        \sum_{z\in \textup{Int}_n(
        C)} \vol_{n+d} (B_z(C_{+\varepsilon}))\\
        &= \frac{1}{\left(1-\frac{\sqrt{n}}{2k\varepsilon}\right)^d} \cdot \vol_{n+d}\left( 
        \bigcup_{z\in \textup{Int}_n(C)} B_z(C_{+\varepsilon}) \right)\\
        &\leq \frac{1}{\left(1-\frac{\sqrt{n}}{2k\varepsilon}\right)^d} \cdot \vol_{n+d} (C_{+\varepsilon})\\
        &= \frac{(1+\varepsilon)^{n+d}}{\left(1-\frac{\sqrt{n}}{2k\varepsilon}\right)^d} \cdot \vol_{n+d} (C)\\
        &\leq \left(\frac{1}{\left(1-\frac{d\sqrt{n}}{2k\varepsilon}\right)} + \frac{2(n+d)\varepsilon}{\left(1-\frac{d\sqrt{n}}{2k\varepsilon}\right)}\right)\vol_{n+d} (C)\\
        &\leq \left( 1 + 2\frac{d\sqrt{n}}{2k\varepsilon} + \frac{20}{9}(n+d)\varepsilon\right)\vol_{n+d} (C)
        \leq \left(1+ 4\frac{dn^{3/4}}{k^{1/2}}\right) \cdot \vol_{n+d} (C).
    \end{align*}
    We deduce, under the assumption that the ball of radius $k$ is centered at $0$, that
    \[
    \vol_{n+d} (C)\cdot \left(1 - 4\frac{d n^{3/4}}{k^{1/2}}\right) \leq  \vol_d (S) \leq \vol_{n+d} (C) \cdot \left(1 + 4\frac{d n^{3/4}}{k^{1/2}}\right).
    \]
    For the general case, we can translate $C$ so $0$ is the closest mixed-integer point to the center of the ball of radius $k$. After this translation, we can replicate the development considering a radius $k' = k-\frac{\sqrt{n}}{2}$. Then, using that $n^{3/2}/k \leq 1/25$, we have that
    \begin{align*}
        \frac{1}{(k')^{1/2}} &= \frac{1}{k^{1/2}}\sqrt{\frac{k}{k-\sqrt{n}/2}}\\
        &= \frac{1}{k^{1/2}}\sqrt{\frac{1}{1-n^{3/2}/(2nk)}}\leq \frac{1}{k^{1/2}}\sqrt{\frac{1}{1-1/50}} = \frac{5\sqrt{2}}{7}\frac{1}{k^{1/2}}\leq \frac{5}{4}\frac{1}{k^{1/2}}.
    \end{align*}
    Thus, the result follows.
\end{proof}

\begin{lemma}
    \label{lem:shift_centroid}
    Let $C\subseteq \R^{n+d}$ be a convex body such that $\proj_{\R^n}(C)$ contains a ball of radius $k>n$. There exists a vector $\mathbf{x}$ such that the centroid of $C+\mathbf{x}$ lies in $\Z^n\times \R^d$ and satisfies
    \[
    \vol_{n+d} \Big((C+\mathbf{x})\setminus C\Big)\leq \left(\left( 1+\frac{\sqrt{n}}{2k}\right)^{n+d} -1 \right)\cdot \vol_{n+d}(C).
    \]
\end{lemma}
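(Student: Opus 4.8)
The plan is to translate $C$ by a single vector $\mathbf{x}$ chosen so that (i) the first $n$ coordinates of the centroid $\centroid(C)+\mathbf{x}$ lie in $\Z^n$, and (ii) $C+\mathbf{x}$ is contained in a dilation of $C$, about a point of $C$, by a factor $1+\beta$ with $\beta\le\frac{\sqrt n}{2k}$. Once (ii) is in hand, I will use that a dilation of a convex body about one of its own points by a factor $\ge 1$ contains the body; so, with $\mathbf{a}\in C$ the dilation center, $C\subseteq \mathbf{a}+(1+\beta)(C-\mathbf{a})$, hence $(C+\mathbf{x})\setminus C\subseteq\big(\mathbf{a}+(1+\beta)(C-\mathbf{a})\big)\setminus C$ and therefore
\[
\vol_{n+d}\big((C+\mathbf{x})\setminus C\big)\le \big((1+\beta)^{n+d}-1\big)\vol_{n+d}(C)\le\Big(\big(1+\tfrac{\sqrt n}{2k}\big)^{n+d}-1\Big)\vol_{n+d}(C),
\]
the last step using that $x\mapsto(1+x)^{n+d}-1$ is nondecreasing on $[0,\infty)$. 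This is exactly the claimed bound.

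To arrange (i) I would set $\mathbf{c}=\centroid(C)$, $c=\proj_{\R^n}(\mathbf{c})$, and let $z^\ast\in\Z^n$ be the coordinatewise rounding of $c$, so that $t:=\|z^\ast-c\|_2\le\sqrt n/2$; it then suffices to make $\proj_{\R^n}(\mathbf{x})=z^\ast-c$, since $\centroid(C+\mathbf{x})=\mathbf{c}+\mathbf{x}$ would then have $\R^n$-part $z^\ast\in\Z^n$. The observation that makes everything work is that the $\R^d$-component of $\mathbf{x}$ is entirely free, and I will spend this freedom to secure (ii). (If $t=0$, take $\mathbf{x}=0$.)

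For (ii), the trick is \emph{not} to translate purely ``horizontally'' by $(z^\ast-c,0)$ — such a translate of a body that is pointed in the direction $z^\ast-c$ need not lie in any small dilation of the body — but to translate along an honest chord of $C$. Write $\hat v=(z^\ast-c)/t$ and let $p$ be the center of the ball guaranteed inside $\proj_{\R^n}(C)$; the antipodal points $p\pm k\hat v$ lie in $\overline{B(p,k)}\subseteq\proj_{\R^n}(C)$, the projection of a compact set being closed. Pick lifts $\mathbf{q}_\pm\in C$ with $\proj_{\R^n}(\mathbf{q}_\pm)=p\pm k\hat v$, and set $\mathbf{e}=\mathbf{q}_+-\mathbf{q}_-$ (a chord direction of $C$, with $\proj_{\R^n}(\mathbf{e})=2k\hat v$) and $\mathbf{x}=\beta\mathbf{e}$ with $\beta=\tfrac{t}{2k}$. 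Then $\proj_{\R^n}(\mathbf{x})=t\hat v=z^\ast-c$, which gives (i), and $\beta=\tfrac{t}{2k}\le\tfrac{\sqrt n}{4k}\le\tfrac{\sqrt n}{2k}$. Taking $\mathbf{a}=\mathbf{q}_-$: for any $\mathbf{y}\in C$ the convex combination $\mathbf{z}:=\tfrac{1}{1+\beta}\mathbf{y}+\tfrac{\beta}{1+\beta}\mathbf{q}_+$ lies in $C$, and $\mathbf{q}_-+(1+\beta)(\mathbf{z}-\mathbf{q}_-)=\mathbf{y}+\beta(\mathbf{q}_+-\mathbf{q}_-)=\mathbf{y}+\mathbf{x}$, so $\mathbf{y}+\mathbf{x}\in\mathbf{q}_-+(1+\beta)(C-\mathbf{q}_-)$. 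This proves $C+\mathbf{x}\subseteq\mathbf{q}_-+(1+\beta)(C-\mathbf{q}_-)$, which is (ii).

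The only genuinely non-routine step is the idea in the previous paragraph: one must shift along a chord of $C$, exploiting that the $\R^d$-coordinates of the centroid carry no constraint, because a purely horizontal shift of a convex body is not in general contained in a dilation of that body by a factor close to $1$. Once this is seen, the rest is elementary convexity, together with the compactness of $\proj_{\R^n}(C)$ used to produce a chord of the required direction and length $2k$. I note in passing that this argument does not actually use the hypothesis $k>n$; the cleaner bound $\beta\le\sqrt n/(4k)$ even comes out of it.
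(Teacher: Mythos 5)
Your proof is correct and follows essentially the same route as the paper's: round the projected centroid to a lattice point, realize the required shift as a small multiple of a vector of $C$ obtained from the ball in $\proj_{\R^n}(C)$, and bound the escaping volume by enclosing $C+\mathbf{x}$ in a $(1+\beta)$-dilation of $C$ (the Thales-type argument of \Cref{lem:thales_corollary}). Your variant, which translates along a full chord $\mathbf{q}_+-\mathbf{q}_-$ and dilates about the endpoint $\mathbf{q}_-\in C$ rather than about the origin, is a slight refinement: it makes explicit that the dilation center must belong to $C$ for the containment of $C$ in its dilation to hold (a normalization the paper leaves implicit), and it yields the marginally better factor $\beta\le\sqrt{n}/(4k)$.
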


\begin{proof}
    Let $\bar{\mathbf{x}} = (\bar{z},\bar{x})$ such that $B(\bar{z},k)\subset \proj_{\R^n}(C)$. Let $(z,x)$ be the centroid of $C$. There must be a point $z'\in \Z^n$ at distance at most $\sqrt{n}/2$ from $z$. Thus, $z''=\bar{z}+\frac{2k}{\sqrt{n}}(z'-z)$ is at a distance at most $k$ from $\bar{z}$, and therefore lies in $\proj_{\R^n}(C)$. That means there is $x''\in \R^d$ such that $\mathbf{x}''= (z'', x'')\in C$. We shift $C$ by $\frac{\sqrt{n}}{2k} (\mathbf{x}''-\bar{\mathbf{x}})$, so the projection of the centroid of the shifted set is $z'$.
    
    Let $D= C-\bar{\mathbf{x}}$. Since $0$ and $\mathbf{x}''-\bar{\mathbf{x}}$ are in $D$, \Cref{lem:thales_corollary} implies that 
     \[D+\frac{\sqrt{n}}{2k} (\mathbf{x}''-\bar{\mathbf{x}}) \subseteq \left(1+\frac{\sqrt{n}}{2k}\right)\cdot D,
     \] 
     and therefore, $(D+\frac{\sqrt{n}}{2k} (\mathbf{x}''-\bar{\mathbf{x}}))\setminus D \subseteq \left((1+\frac{\sqrt{n}}{2k})\cdot D \right)\setminus D $. The result follows by noting that  $\vol_{n+d}\left(\big(C+\frac{\sqrt{n}}{2k} (\mathbf{x}''-\bar{\mathbf{x}})\big)\setminus C\right) = \vol_{n+d}\left(\big(D+\frac{\sqrt{n}}{2k} (\mathbf{x}''-\bar{\mathbf{x}})\big)\setminus D\right)$.
\end{proof}


\subsection{Main result for the general case}
We now present our main result.
\begin{theorem}
    \label{thm:general_n}
    Let $C\subseteq \R^{n+d}$ be a convex body such that $\proj_{\R^n}(C)$ contains a ball of radius $k$. There is a point $\mathbf{x}\in S=C\cap (\Z^n\times \R^d)$ such that for every halfspace $H$ that contains $\mathbf{x}$,
    \[
    \vol_d(S\cap H ) \geq \left( \frac{1}{e} - 32\frac{dn^{3/4}}{k^{1/2}}\right) \cdot \vol_d (S).
    \]
    In particular, there exists a universal constant $\alpha>0$ such that if $k\geq \alpha d^2n^{3/2}$, then
    \[
    \mathcal{F}(S) \geq \frac{1}{2^n}\left(\frac{d}{d+1}\right)^{d}.
    \]
\end{theorem}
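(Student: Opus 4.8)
The plan is to assemble the technical lemmas exactly as the outline in Section~\ref{sec:general} promises. First I would invoke \Cref{lem:shift_centroid} to replace $C$ by a shifted copy $C'$ whose centroid $\mathbf{x}^\ast=(\bar z,\bar x)$ has $\bar z\in\Z^n$, losing only a $\left(\left(1+\tfrac{\sqrt n}{2k}\right)^{n+d}-1\right)$ fraction of volume; note $\left(1+\tfrac{\sqrt n}{2k}\right)^{n+d}-1\le O\!\left(\tfrac{\sqrt n\,(n+d)}{k}\right)$, which is dominated by $O(dn^{3/4}/k^{1/2})$ when $k\ge\alpha d^2n^{3/2}$. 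Since $\bar z\in\Z^n$, the slice $S_{\bar z}(C')=C'\cap(\{\bar z\}\times\R^d)$ is a genuine $d$-dimensional fiber of $S'$, so $\mathbf{x}^\ast\in S'\subseteq S$ is a legitimate candidate for the point in the statement.

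Next, for an arbitrary halfspace $H$ containing $\mathbf{x}^\ast$, write $A'=C'\cap H$ and $B'=C'\setminus H$; these split $C'$ through its centroid, so by Gr\"unbaum's inequality \eqref{eq:GrunbaumInequality} both $\vol_{n+d}(A')$ and $\vol_{n+d}(B')$ are at least $\tfrac1e\vol_{n+d}(C')\ge\tfrac1e\vol_{n+d}(C)$. Now the covering lemma of Kadets~\cite{Kadets2005Covering} gives that one of $\proj_{\R^n}(A')$, $\proj_{\R^n}(B')$ contains a ball of radius $k/2$ (their union is $\proj_{\R^n}(C')$, which contains a ball of radius $\approx k$); say it is $A'$. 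Then \Cref{lem:approx_mu_with_nu} applies both to $C$ (radius $k$) and to $A'$ (radius $k/2$), so $\vol_d(S)$ and $\vol_d(S\cap H)=\vol_d(S\cap A')$ are each within a $\bigl(1\pm O(dn^{3/4}/k^{1/2})\bigr)$ factor of $\vol_{n+d}(C)$ and $\vol_{n+d}(A')$ respectively. Chaining the inequalities — $\vol_d(S\cap H)\ge(1-O(\cdot))\vol_{n+d}(A')\ge(1-O(\cdot))\tfrac1e\vol_{n+d}(C)\ge(\tfrac1e-O(\cdot))\vol_d(S)$ — and tracking constants carefully yields the claimed $\left(\tfrac1e-11\tfrac{dn^{3/4}}{k^{1/2}}\right)\vol_d(S)$.

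For the ``in particular'' clause, observe that $\tfrac1e>\tfrac14\ge\tfrac12\left(\tfrac{d}{d+1}\right)^d\ge\tfrac{1}{2^n}\left(\tfrac{d}{d+1}\right)^d$ for every $n\ge1$, $d\ge1$ (using $\left(\tfrac{d}{d+1}\right)^d\le\tfrac12$ and monotonicity in $n$); hence it suffices to make the error term $11\,dn^{3/4}/k^{1/2}$ smaller than the fixed gap $\tfrac1e-\tfrac12\left(\tfrac{d}{d+1}\right)^d$, which (since $\left(\tfrac{d}{d+1}\right)^d\downarrow\tfrac1e$, the gap is bounded below by a positive constant, e.g. $\tfrac1e-\tfrac{1}{4}$) holds as soon as $k\ge\alpha d^2 n^{3/2}$ for a suitable universal $\alpha$. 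One also needs the running hypotheses $5dn^{3/4}/k^{1/2}\le1$ and $k/2>n$ of the lemmas, which are likewise guaranteed for $\alpha$ large enough.

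The main obstacle is bookkeeping: every lemma carries its own constant, and they must be combined so that the final error is genuinely at most $11dn^{3/4}/k^{1/2}$ — in particular the centroid shift of \Cref{lem:shift_centroid} produces an error in the ``wrong'' variable ($\sqrt n(n+d)/k$ rather than $dn^{3/4}/k^{1/2}$), so one must check it is absorbed, which is exactly where the $d^2$ (rather than $d^{3/2}$) in the threshold comes from; and the symmetric-difference bound from the shift affects $\vol_{n+d}(C')$ versus $\vol_{n+d}(C)$ on \emph{both} sides of the Gr\"unbaum split, so one must be careful that it does not degrade the $1/e$ constant multiplicatively in a way that cannot be folded into the additive error term. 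Handling the case where it is $B'$ (not $A'$) that contains the large ball is symmetric and requires only swapping $H$ with its complement, as in the proof of \Cref{thm:Main-OneDimension}.
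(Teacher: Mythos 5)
Your proposal follows essentially the same route as the paper's proof: shift the centroid into $\Z^n\times\R^d$ via \Cref{lem:shift_centroid}, apply Gr\"unbaum's inequality to the shifted body, invoke the covering result of \cite{Kadets2005Covering} to decide which side of the cut projects onto a ball of radius $k/2$, and approximate the mixed-integer volumes by \Cref{lem:approx_mu_with_nu} while absorbing the shift error $\sqrt{n}(n+d)/k$ into the $dn^{3/4}/k^{1/2}$ term. Two bookkeeping points where the paper is more careful than your sketch: the slice-approximation lemma must be applied to $A=C\cap H$ and $B=C\setminus H$ (pieces of the \emph{original} body, since the theorem concerns $S$ and the integer slices of $A'$ belong to $S'$, which is not contained in $S$), with $\vol_{n+d}(A)\geq\vol_{n+d}(A')-\vol_{n+d}(C'\setminus C)$ recovering the Gr\"unbaum bound; and the side whose projection does \emph{not} contain the large ball is handled by the subtraction argument $\vol_d(S\cap H)=\vol_d(S)-\vol_d(S\setminus H)$ (exactly as in \Cref{thm:Main-OneDimension}), not by a literal swap of $H$ with its complement.
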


\begin{proof}
    Without loss of generality, let us suppose that 
    \begin{equation}\label{eq:InProof-RequiredGap}
     32\frac{dn^{3/4}}{k^{1/2}} \leq \frac{1}{e},   
    \end{equation}
    since otherwise the result is trivial. Note that in this case the inequality  $5\frac{dn^{3/4}}{k^{1/2}} \leq 1$ required in \Cref{lem:approx_mu_with_nu} also holds. Let $C'=C+\mathbf{x}$, where $\mathbf{x}$ is the vector guaranteed to exist by \Cref{lem:shift_centroid} so that the centroid of $C'$ is in $S' = C'\cap (\Z^n\times \R^d)$. Let $H$ be an arbitrary affine halfspace defined by an affine hyperplane that passes through the centroid of $C'$. Let $A'=  C'\cap H$, $A=C\cap H$, $B' = C'\setminus H$, and $B=C\setminus H$. Without loss of generality, we assume that all these sets are full-dimensional. We have that $\vol_{n+d}(A')\geq (1/e)\cdot \vol_{n+d} (C')$ and $\vol_{n+d}(B')\geq (1/e)\cdot \vol_{n+d}(C')$. Either $\proj_{\R^n}(A)$ or $\proj_{\R^n}(B)$ contains a ball of radius $k/2$. Since we will prove the same lower bound for both sides, we assume that $\proj_{\R^n}(A)$ contains a ball of radius $k/2$. For notational convenience, let $\delta = 5\frac{dn^{3/4}}{k^{1/2}}$.
    
    Since $\delta \leq 1$ required in \Cref{lem:approx_mu_with_nu} also holds directly from \eqref{eq:InProof-RequiredGap} and noting that $(1-a)(1-b)\geq 1-a-b$ for every $a,b\geq 0$,  we have that
    \begin{align*}
    \vol_d(A\cap (\Z^n\times \R^d)) 
    &\geq \vol_{n+d}(A) \cdot \left(1-\sqrt{2}\delta\right)\\
    &\geq \left(\vol_{n+d}(A')-\vol_{n+d} \left( C'\setminus C \right)\right)
    \cdot \left(1-\sqrt{2}\delta\right)\\
    &\geq \vol_{n+d} (C)\left(\frac{1}{e} - \frac{\sqrt{n}(n+d)}{k}\right)\cdot 
    \left(1-\sqrt{2}\delta\right)\\
    &\geq \frac{1}{e}\vol_{n+d} (C)\left( 1 - \frac{e}{32}\cdot d \frac{n^{3/4}}{k^{1/2}}\right)\cdot 
    \left(1-5\sqrt{2}\frac{dn^{3/4}}{k^{1/2}}\right)\\
    &\geq \vol_{n+d} (C) \cdot \frac{1}{e} \cdot 
    \left(1-8\frac{dn^{3/4}}{k^{1/2}}\right),
    \end{align*}  
    where in the third inequality we applied \Cref{lem:shift_centroid}, and in the fourth inequality we used the following relation that relies on \eqref{eq:InProof-RequiredGap}
    
    \[
    \frac{\sqrt{n}(n+d)}{k} \leq \frac{\sqrt{n}(2dn)}{k} = \left(2\frac{n^{3/4}}{k^{1/2}}\right)\cdot d \frac{n^{3/4}}{k^{1/2}}\leq \left(\frac{2}{32ed}\right)\cdot d \frac{n^{3/4}}{k^{1/2}}\leq \frac{1}{32}\cdot d \frac{n^{3/4}}{k^{1/2}}.
    \] 
    
    On the other hand,
    
    \begin{align*}
        \vol_d (B\cap (\Z^n\times \R^d)) 
        &= \vol_d (S) - 
        \vol_d(A \cap (\Z^n\times \R^d)) \\
        &\geq \vol_{n+d} (C)\left(1-\delta\right) - \left(1+\sqrt{2}\delta\right)\vol_{n+d} (A)\\
        &= \left( \vol_{n+d} (C) - \vol_{n+d} (A) \right)
        \cdot \left(1+\sqrt{2}\delta\right) - (1+\sqrt{2})\delta \vol_{n+d} (C)\\
        &= \vol_{n+d}(B) \cdot \left(1+\sqrt{2}\delta\right) - (1+\sqrt{2})\delta \vol_{n+d} (C)\\
        &\geq \left( \vol_{n+d}(B') - \vol_{n+d} (C'\setminus C) \right) \left(1+\sqrt{2}\delta\right) - (1+\sqrt{2})\delta \vol_{n+d} (C)\\
        & \geq \left( \frac{1}{e}\vol_{n+d}(C') - \vol_{n+d} (C'\setminus C) \right) \left(1+\sqrt{2}\delta\right) - (1+\sqrt{2})\delta \vol_{n+d} (C)
        \\
        &\geq  \frac{1}{e}\vol_{n+d} (C)\left( 1 - \frac{e}{5\cdot 32}\cdot \delta \right)\cdot 
    \left(1+\sqrt{2}\delta\right) - (1+\sqrt{2})\delta \vol_{n+d} (C)\\
    &= \frac{1}{e}\vol_{n+d} (C)\left( 1 - \delta \cdot \left( \frac{e}{5\cdot 32} - \sqrt{2} + \frac{e\sqrt{2}}{5\cdot 32} \delta +e(1 +\sqrt{2})\right)\right)\\
        &\geq \vol_{n+d}(C) \cdot \frac{1}{e} \cdot 
    \left(1-\delta\cdot\left(e(1+\sqrt{2})+\frac{e}{5\cdot 32} +\frac{e\sqrt{2}}{32} - \sqrt{2}\right)\right)\\
    &\geq \vol_{n+d}(C) \cdot \frac{1}{e} \cdot 
    \left(1-27\frac{dn^{3/4}}{k^{1/2}}\right), 
    \end{align*}
    
Now, directly by \Cref{lem:approx_mu_with_nu},
    \begin{align*}
        \vol_d (S) \leq \vol_{n+d} (C) \cdot \left(1+5\frac{dn^{3/4}}{k^{1/2}}\right).
    \end{align*}
    Therefore,
    \begin{align*}
         \frac{\min\{\vol_d (A\cap (\Z^n\times\R^d)), \vol_d (B\cap (\Z^n\times\R^d)) \}}{\vol_d (S)}
        &\geq \frac{1}{e} \cdot \frac{1-27\frac{dn^{3/4}}{k^{1/2}}}{1+5\frac{dn^{3/4}}{k^{1/2}}}\\
        &\geq \frac{1}{e} \cdot \frac{\left(1-27\frac{dn^{3/4}}{k^{1/2}}\right)\left(1-5\frac{dn^{3/4}}{k^{1/2}}\right)}{\left(1+5\frac{dn^{3/4}}{k^{1/2}}\right)\left(1-5\frac{dn^{3/4}}{k^{1/2}}\right)}.\\
        &\geq \frac{1}{e} \cdot \left(1-32\frac{dn^{3/4}}{k^{1/2}}\right).
    \end{align*}
    
    The second part of the proof is analogous to the final step in the proof of \Cref{thm:Main-OneDimension}. Indeed, if $k\geq \alpha d^2n^{3/2}$, we get that
    \[
    \mathcal{F}(S) \geq \frac{1}{e}\left(1 - \frac{32}{\sqrt{\alpha}}\right).
    \]
    We can choose $\alpha = \left(\frac{128}{4-e}\right)^2$ so $\frac{1}{e}\left(1 - \frac{32}{\sqrt{\alpha}}\right) = \frac{1}{4} \geq \frac{1}{2^n}\left(\frac{d}{d+1}\right)^d$.
\end{proof}

\begin{remark}\label{rem:unimodular-ball}
    As we will discuss in the next section, unimodular linear transformations preserve mixed-integer volumes. Thus, \Cref{thm:general_n} also holds if $\proj_{\R^n}(C)$ contains a unimodular copy of a ball of radius $k$.
\end{remark}


\subsection{A new Threshold for the Lattice Width}

Here we discuss the relation between the condition in our result, namely that the projection contains a ball of large radius, and the large-lattice-width condition in the result of Basu and Oertel. We show that if the lattice width of a convex set is large, then, after an appropriate transformation, its projection in $\R^n$ contains a large ball. Consequently, we can reinterpret our bound in terms of the lattice width.

Here, $\textup{Flt}(n)$ is the flatness constant in dimension $n$, which is defined as the supremum of the lattice width of convex sets contained in $\R^n\setminus \Z^n$. It is known that $\textup{Flt}(n)\leq n^{5/2}$ ~\cite[Chapter VII, Theorem 8.3]{Barvinok2002Course}. Moreover, it has been shown that  $\textup{Flt}(n)\leq O(n^{3/2})$ \cite{Banaszczyk1999}.

\begin{proposition}
\label{prop:equivalence_lattice_width}
    Given a convex body $K\subseteq \R^{n}$ there is an unimodular linear transformation $L:\R^{n}\rightarrow \R^{n}$ such that $L(K)$ contains a ball of radius $\omega(K)/(2n^2\textup{Flt}(n))$.
\end{proposition}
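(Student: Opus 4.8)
The plan is to reduce to a one-dimensional statement about widths via John's theorem, and then use the flatness constant to control the "thin" directions of the resulting ellipsoid. First I would invoke John's theorem to find an ellipsoid $E \subseteq K$ such that $K \subseteq n \cdot E$ (concentric scaling). Writing $E$ in terms of its principal semi-axes $r_1 \geq r_2 \geq \cdots \geq r_n$ along an orthonormal basis, the ball we want to place inside $L(K)$ will essentially be a ball of radius $\min_i r_i$ after we rescale the short axes up — so the whole game is to show $r_n$ (the smallest semi-axis) cannot be too small relative to $\omega(K)$, and then find a unimodular map that stretches the short directions without destroying the long ones.

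The key steps, in order: (1) Apply John's theorem: $E \subseteq K \subseteq nE$. (2) Lower bound the smallest semi-axis: since $K \subseteq nE$, every lattice direction $u$ has width of $K$ at most $2n$ times the width of $E$ in direction $u$, which is $2n r(u)$ where $r(u)$ is the corresponding radius of $E$; taking $u$ to be the lattice direction realizing $\omega(K)$ gives a relation, but more usefully, if the smallest semi-axis $r_n$ were smaller than $\mathrm{Flt}(n)/2$, then $E$ (hence a translate of $E$, hence a translate of $K$ scaled down by $n$) would be a convex body of lattice width less than $\mathrm{Flt}(n)$ — wait, we need lattice width of $E$ itself small. Concretely: $E$ has lattice width at most $2 r_n \cdot(\text{something})$? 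No — $E$ is a full ellipsoid so its width in direction $u$ is $2\langle A^{-1}u, A^{-1}u\rangle^{-1/2}$-type quantity; the minimal width over integer directions is at least... this is exactly where the flatness constant enters: if $E$ contains no lattice point we'd get $\omega(E) \le \mathrm{Flt}(n)$, but $E$ might contain many. (3) The correct route: find a unimodular $L$ making $L(E)$ "round" in the sense that all its semi-axes are $\geq \omega(K)/(n^2 \mathrm{Flt}(n))$. Do this greedily: as long as some semi-axis of the current ellipsoid is shorter than $\omega(K)/(n^2\mathrm{Flt}(n))$, the ellipsoid restricted to a suitable rational subspace has small lattice width, so by the flatness theorem there is a lattice hyperplane direction along which it is thin; apply a shear (unimodular) that "folds" that thin direction in, strictly increasing a volume-type potential, and iterate. (4) Since unimodular maps preserve $\omega(\cdot)$ and $\mathrm{vol}$, and $K \supseteq E$, the final $L(K) \supseteq L(E)$ contains a ball of the claimed radius; the bound $\omega(K)/n^{9/2}$ follows from $\mathrm{Flt}(n) \le n^{5/2}$.

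I would actually expect the cleanest writeup to avoid the iteration and argue directly: let $E \subseteq K \subseteq nE$ by John, let $w = \omega(K)$, and show by induction on $n$ that one can unimodularly transform so that the image of $E$ contains a ball of radius $w/(n^2 \mathrm{Flt}(n))$. If the smallest axis of $E$ is already $\geq w/(n^2\mathrm{Flt}(n))$ we are done (the inscribed ball of $E$ works, up to the $n$ factor absorbed appropriately). Otherwise, project $K$ along the short axis onto a lattice hyperplane: the flatness theorem says a convex body that is "thin" in every lattice direction is contained in few lattice hyperplanes, and one extracts from the thinness a unimodular shear in the short direction that increases the short semi-axis by a definite factor (roughly: a primitive integer vector $u$ with small $\langle$width$\rangle$ gives a shear $x \mapsto x + \langle u,x\rangle v$ that can be tuned to enlarge the short direction while the long directions, being already long, are affected only by lower-order amounts controlled by $K \subseteq nE$). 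Iterate until all axes are large; bound the number of iterations and the accumulated distortion.

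The main obstacle I anticipate is controlling the interaction between the shear and the \emph{other} axes: a unimodular shear that fixes a thin direction necessarily perturbs the rest of the ellipsoid, and one must ensure these perturbations do not shrink the already-long axes below the target radius, nor cause the iteration to loop. The factor $n^2$ (as opposed to just $n$) in the denominator is presumably exactly the slack needed to absorb this cross-contamination over all $n$ coordinates, and getting the bookkeeping on that slack right — tracking how $K \subseteq nE$ bounds the off-diagonal damage at each step — is where the real work lies; the appeal to John's theorem and to $\mathrm{Flt}(n) \le n^{5/2}$ are routine by comparison.
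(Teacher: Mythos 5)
There is a genuine gap: the step you explicitly defer (``where the real work lies'') is the entire content of the proposition, and the mechanism you propose for it does not work as described. Your plan hinges on the claim that if the John ellipsoid $E\subseteq K\subseteq nE$ has a short semi-axis, then there is an integer direction along which the body is thin, which you could then shear away. But a short Euclidean axis need not be aligned, even approximately, with any lattice direction: a slab of width $1$ around a line of irrational slope, truncated to length $M$, has inradius $O(1)$ yet lattice width $\Omega(\sqrt{M})$, so there is no ``primitive integer vector $u$ with small width'' to shear along. Relatedly, you invoke the flatness theorem in the wrong direction --- it converts ``contains no lattice point'' into ``small lattice width,'' not ``small inradius'' into ``thin along a lattice hyperplane direction.'' Finally, a unimodular map preserves volume, so a shear cannot enlarge the short axis without paying elsewhere; without the potential/termination argument you acknowledge omitting, the iteration does not constitute a proof, and the exponent $n^2\textup{Flt}(n)$ is never actually derived.

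For comparison, the paper's proof is a two-line reduction to a known result: by Averkov et al.\ \cite[Corollary~2.2]{Averkov2023Generalized}, every convex body $K\subseteq\R^n$ contains an affine unimodular copy of $\frac{\omega(K)}{n\,\textup{Flt}(n)}\cdot\Delta_n$, where $\Delta_n=\conv\{0,e_1,\dots,e_n\}$ is the standard simplex; since $\Delta_n$ contains a ball of radius about $1/n$, the image of $K$ contains a ball of radius $\frac{\omega(K)}{n^2\textup{Flt}(n)}$, and $\textup{Flt}(n)\le n^{5/2}$ gives the final bound. If you want a self-contained argument instead of the citation, the standard route is an induction on $n$ driven by flatness (if $\omega(K)$ exceeds $\textup{Flt}(n)$, a shrunken copy of $K$ contains a lattice point; translate it to the origin and recurse on a lattice projection to build up a large unimodular simplex), or an ellipsoid-reduction argument that additionally requires a transference theorem relating the reduced basis of the John ellipsoid to the lattice width; neither ingredient appears in your sketch.
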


\begin{proof}
    Let $\Delta_n$ be the $n$-dimensional standard simplex in $\R^n$, that is, $\Delta_n= \textup{conv}\{\mathbf{0},\mathbf{e}_1,\dots,\mathbf{e}_n\}$. Averkov, Hofscheier and Nill~\cite[Corollary~2.2]{Averkov2023Generalized} showed that a convex set $K\subseteq \R^n$ always contains a unimodular copy of $\frac{\omega(K)}{n\textup{Flt}(n)}\cdot \Delta_n$, i.e., there is an affine unimodular transformation (i.e. a unimodular transformation plus an integer translation) such that the image of $K$ contains $\frac{\omega(K)}{n\textup{Flt}(n)}\cdot \Delta_n$. 
    It is a well-known fact that the standard simplex $\Delta_n$ contains the hypercube $[0,1/n]^n$, which in turn contains a ball of radius $1/(2n)$, and therefore, the image of $K$ contains a ball of radius $\frac{\omega(K)}{2n^2\textup{Flt}(n)}$. By inverting the integer translation if necessary, the result follows.
\end{proof}

A unimodular linear transformation in $\R^n$ is a linear transformation that is invertible over the integers, i.e., it identifies points in $\Z^n$ with points in $\Z^n$. If we extend $L$, the linear transformation of the proposition, to $\R^{n+d}$ using the identity in $\R^d$, we obtain an invertible linear transformation $\bar{L}= (L,\text{Id})$ that preserves all the volumes in $\Z^n\times \R^d$. This means that we can effectively work with $\bar{L}(C\cap (\Z^n\times \R^d))$ instead of $C\cap (\Z^n\times \R^d)$, and any conclusion about the volume also holds for $C\cap (\Z^n\times \R^d$). Thus, we obtain the conclusion announced in \Cref{rem:unimodular-ball}, as well as the following corollary as a consequence of our theorem and \cite{Banaszczyk1999}.

\begin{corollary}\label{cor:Main-LatticeWidth}
     Let $C\subseteq \R^{n+d}$ be a convex body. There is a point $\mathbf{x}\in S=C\cap(\Z^n\times \R^d)$ such that for every halfspace $H$ that contains $\mathbf{x}$,
     \[ 
     \vol_d(H\cap S)
     \geq \left( \frac{1}{e} - 27\sqrt{2}\frac{dn^{7/4} \sqrt{\text{Flt(n)}}}{\sqrt{\omega(\proj_{\R^n}(C))}}\right)
     \cdot
     \vol_d (S).
     \]   
     In particular, there is a universal constant $\alpha>0$ such that if $\omega(\proj_{\R^n}(C)) \geq \alpha d^2 n^5$, then
    \[
    \mathcal{F}(S) \geq \frac{1}{2^n} \left( \frac{d}{d+1}\right)^d.
    \]
\end{corollary}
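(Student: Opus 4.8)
\textbf{Proof plan for \Cref{cor:Main-LatticeWidth}.}

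The plan is to reduce the corollary to \Cref{thm:general_n} via the unimodular change of coordinates supplied by \Cref{prop:equivalence_lattice_width}. First I would apply \Cref{prop:equivalence_lattice_width} to the convex body $K = \proj_{\R^n}(C) \subseteq \R^n$: this yields a unimodular linear map $L : \R^n \to \R^n$ such that $L(K)$ contains a ball of radius $r := \omega(K)/(n^2\textup{Flt}(n)) \geq \omega(K)/n^{9/2}$. Extending $L$ to $\bar L = (L, \mathrm{Id})$ on $\R^{n+d}$ gives an invertible linear transformation which maps $\Z^n \times \R^d$ bijectively onto itself, maps $C$ onto the convex body $\bar C := \bar L(C)$, and — crucially — preserves the measure $\vol_d$ restricted to each fibre $\{z\}\times\R^d$ and the counting structure on $\Z^n$, so that $\vol_d(\bar L(A)) = \vol_d(A)$ for every $A \subseteq \Z^n\times\R^d$ and in particular $\mathcal{F}(\bar C \cap (\Z^n\times\R^d)) = \mathcal{F}(S)$. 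Moreover $\proj_{\R^n}(\bar C) = L(\proj_{\R^n}(C)) = L(K)$ contains a ball of radius $r$.

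Next I would invoke \Cref{thm:general_n} applied to $\bar C$ with this value of $k = r$. That gives a point $\bar{\mathbf{x}} \in \bar C \cap (\Z^n\times\R^d)$ such that every halfspace $H$ containing $\bar{\mathbf{x}}$ satisfies $\vol_d(H \cap (\bar C\cap(\Z^n\times\R^d))) \geq (\tfrac{1}{e} - 11 d n^{3/4}/r^{1/2}) \cdot \vol_d(\bar C\cap(\Z^n\times\R^d))$. Pulling back through $\bar L^{-1}$ — which also maps halfspaces to halfspaces and preserves the relevant volumes — produces the point $\mathbf{x} = \bar L^{-1}(\bar{\mathbf{x}}) \in S$, and substituting $r \geq \omega(\proj_{\R^n}(C))/(n^2\textup{Flt}(n))$ gives
\[
\frac{dn^{3/4}}{r^{1/2}} \leq \frac{dn^{3/4} \cdot n \sqrt{\textup{Flt}(n)}}{\sqrt{\omega(\proj_{\R^n}(C))}} = \frac{dn^{7/4}\sqrt{\textup{Flt}(n)}}{\sqrt{\omega(\proj_{\R^n}(C))}},
\]
which is exactly the error term in the statement. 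For the ``in particular'' part, I would use the bound $\textup{Flt}(n) \leq n^{5/2}$ to get $dn^{7/4}\sqrt{\textup{Flt}(n)} \leq d n^{3}$, so the error term is at most $11 d n^3 / \sqrt{\omega(\proj_{\R^n}(C))}$; requiring $\omega(\proj_{\R^n}(C)) \geq \alpha d^2 n^6$ makes this at most $11/\sqrt{\alpha}$, and choosing $\alpha = (44/(4-e))^2$ (as in the proof of \Cref{thm:general_n}) yields $\mathcal{F}(S) \geq \tfrac{1}{e}(1 - 11/\sqrt{\alpha}) \geq \tfrac14 \geq \tfrac{1}{2^n}(\tfrac{d}{d+1})^d$.

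The only genuinely substantive point — and the one I would be most careful about — is verifying that the extended transformation $\bar L$ really does preserve all the quantities in play: that $\bar L$ maps $\Z^n\times\R^d$ onto itself bijectively (immediate from unimodularity of $L$), that it preserves $\vol_d$ on each fibre (true because $\bar L$ acts as $L$ on the $\Z^n$-coordinates and as the identity on $\R^d$, and distinct integer fibres are mapped to distinct integer fibres), that a point is in $S$ iff its image is in $\bar L(S)$, and that the image of a halfspace through $\mathbf{x}$ under $\bar L$ is a halfspace through $\bar L(\mathbf{x})$ — all of which hold because $\bar L$ is linear and invertible. Everything else is bookkeeping: substituting the radius bound from \Cref{prop:equivalence_lattice_width} into the error term of \Cref{thm:general_n}, and the same arithmetic with the universal constant $\alpha$ already carried out at the end of the proof of \Cref{thm:general_n}.
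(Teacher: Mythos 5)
Your proposal is correct and follows essentially the same route as the paper: apply \Cref{prop:equivalence_lattice_width} to $\proj_{\R^n}(C)$, extend the unimodular map to $\bar L=(L,\mathrm{Id})$, which preserves $\Z^n\times\R^d$, fibrewise $\vol_d$, and halfspaces, then invoke \Cref{thm:general_n} with $k=\omega(\proj_{\R^n}(C))/(n^2\textup{Flt}(n))$ and substitute. The arithmetic (the exponent $7/4$, the bound $\textup{Flt}(n)\leq n^{5/2}$ giving the $d^2n^6$ threshold, and the reuse of $\alpha=(44/(4-e))^2$) all checks out.
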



\paragraph{Acknowledgments.}  Both authors were partially funded by the Center for Mathematical Modeling (CMM), FB210005, BASAL funds
for centers of excellence (ANID-Chile). The second author was partially funded by the project FONDECYT 1251159 (ANID-Chile).
The second author wishes to thank A. Basu for his lectures and useful discussions at IPCO 2023, which motivated this research.  Part of this research was conducted during the first author's postdoctoral fellowship at the Center for Mathematical Modeling (CMM). Part of this research was conducted during a 3-month visit of the second author to the IMT, Université Toulouse Paul Sabatier, CNRS, INSA Toulouse, UT, INU, UT Capitole, UT2J, funded by CNRS through the \textit{Poste Rouge} program. We thank the anonymous referees for their thorough reviews, which allowed us to significantly improve our initial manuscript and refine some results. 

\section*{Declarations}
\paragraph{Conflict of interest.} The authors have no relevant financial or non-financial interests to disclose.
\bibliographystyle{abbrvnat}
 \bibliography{lit}
\end{document}